\newcommand{\bA}{\ensuremath{\mathbf{A}}}
\newcommand{\bB}{\ensuremath{\mathbf{B}}}
\newcommand{\bC}{\ensuremath{\mathbf{C}}}
\newcommand{\bF}{\ensuremath{\mathbf{F}}}
\newcommand{\bG}{\ensuremath{\mathbf{G}}}
\newcommand{\bH}{\ensuremath{\mathbf{H}}}
\newcommand{\bI}{\ensuremath{\mathbf{I}}}
\newcommand{\bM}{\ensuremath{\mathbf{M}}}
\newcommand{\bN}{\ensuremath{\mathbf{N}}}
\newcommand{\bP}{\ensuremath{\mathbf{P}}}
\newcommand{\bQ}{\ensuremath{\mathbf{Q}}}
\newcommand{\bV}{\ensuremath{\mathbf{V}}}
\newcommand{\bW}{\ensuremath{\mathbf{W}}}
\newcommand{\bLamb}{\ensuremath{\mathbf{\Lambda}}}
\newcommand{\bb}{\ensuremath{\mathbf{b}}}
\newcommand{\bc}{\ensuremath{\mathbf{c}}}
\newcommand{\bff}{\ensuremath{\mathbf{f}}}
\newcommand{\bg}{\ensuremath{\mathbf{g}}}
\newcommand{\bh}{\ensuremath{\mathbf{h}}}
\newcommand{\bu}{\ensuremath{\mathbf{u}}}
\newcommand{\bv}{\ensuremath{\mathbf{v}}}
\newcommand{\bw}{\ensuremath{\mathbf{w}}}
\newcommand{\bx}{\ensuremath{\mathbf{x}}}
\newcommand{\by}{\ensuremath{\mathbf{y}}}
\newcommand{\Ldir}{{\boldsymbol{\ell}}}
\newcommand{\Rdir}{{\boldsymbol{r}}}
\newcommand{\whA}{\ensuremath{\widehat{\mathbf{A}}}}
\newcommand{\whB}{\ensuremath{\widehat{\mathbf{B}}}}
\newcommand{\whC}{\ensuremath{\widehat{\mathbf{C}}}}
\newcommand{\whF}{\ensuremath{\widehat{\mathbf{F}}}}
\newcommand{\whG}{\ensuremath{\widehat{\mathbf{G}}}}
\newcommand{\whM}{\ensuremath{\widehat{\mathbf{M}}}}
\newcommand{\whN}{\ensuremath{\widehat{\mathbf{N}}}}
\newcommand{\whP}{\ensuremath{\widehat{\mathbf{P}}}}
\newcommand{\whQ}{\ensuremath{\widehat{\mathbf{Q}}}}
\newcommand{\whV}{\ensuremath{\widehat{\mathbf{V}}}}
\newcommand{\whW}{\ensuremath{\widehat{\mathbf{W}}}}
\newcommand{\whb}{\ensuremath{\widehat{\mathbf{b}}}}
\newcommand{\whc}{\ensuremath{\widehat{\mathbf{c}}}}
\newcommand{\whf}{\ensuremath{\widehat{\mathbf{f}}}}
\newcommand{\whv}{\ensuremath{\widehat{\mathbf{v}}}}
\newcommand{\whw}{\ensuremath{\widehat{\mathbf{w}}}}
\newcommand{\whx}{\ensuremath{\widehat{\mathbf{x}}}}
\newcommand{\why}{\ensuremath{\widehat{\mathbf{y}}}}
\newif\ifmatlab\matlabtrue
\newcommand{\cH}{ {\mathcal H} }
\newcommand\IR{ {\mathbb R}}
\newcommand\IC{ {\mathbb C}}
\newcommand{\htwogap}{\cH_2\text{-gap}}
\newcommand{\htwo}{\cH_2}
\DeclareMathOperator*{\argmin}{arg\,min}
\renewcommand*{\backrefalt}[4]{%
\ifcase #1 %
(Not cited)%
\or
(Cited on p.~#2)%
\else
(Cited on pp.~#2)%
\fi
}
\begin{document}

\title*{$\mathcal{H}_2$-gap model reduction for stabilizable \\ and detectable systems}
% Use \titlerunning{Short Title} for an abbreviated version of
% your contribution title if the original one is too long
\author{T. Breiten and C.~A. Beattie and S. Gugercin}
% Use \authorrunning{Short Title} for an abbreviated version of
% your contribution title if the original one is too long
\institute{T. Breiten \at Institute of Mathematics and Scientific Computing, University of Graz, 8010 Graz, Austria, \\ \email{tobias.breiten@uni-graz.at} 
\and
C.~A. Beattie \at Department of Mathematics, Virginia Tech, Blacksburg, VA 24061-0123, USA, \\ \email{beattie@vt.edu}
\and S. Gugercin \at Department of Mathematics, Virginia Tech, Blacksburg, VA 24061-0123, USA,\\  \email{gugercin@vt.edu}}
%
% Use the package "url.sty" to avoid
% problems with special characters
% used in your e-mail or web address
%
\maketitle

%\abstract*{Model reduction for stabilizable and detectable linear time-invariant control systems is studied. Based on the solution of two algebraic Riccati equations, a modified $\mathcal{H}_2$-error measure is introduced. A pole-residue formula involving the closed-loop eigenvalues is derived and analyzed with respect to interpolation-based model reduction. A numerical algorithm for constructing reduced-order models is proposed and shown to be computable without solving the Riccati equations of the original large-scale system. Numerical examples for an unstable convection diffusion equation and a linearized flow problem are given. The new approach is compared to existing model reduction techniques for unstable systems.}
%
%\abstract{Model reduction for stabilizable and detectable linear time-invariant control systems is studied. Based on the solution of two algebraic Riccati equations, a modified $\mathcal{H}_2$-error measure is introduced. A pole-residue formula involving the closed-loop eigenvalues is derived and analyzed with respect to interpolation-based model reduction. A numerical algorithm for constructing reduced-order models is proposed and shown to be computable without solving the Riccati equations of the original large-scale system. Numerical examples for an unstable convection diffusion equation and a linearized flow problem are given. The new approach is compared to existing model reduction techniques for unstable systems.}

\abstract*{We formulate here an approach to model reduction that is well-suited for linear time-invariant control systems that are stabilizable and detectable but may otherwise be unstable.   We introduce a modified $\mathcal{H}_2$-error metric, the 
$\mathcal{H}_2$-gap, that provides an effective measure of model fidelity in this setting.   While the direct evaluation of the $\mathcal{H}_2$-gap requires the solutions of a pair of algebraic Riccati equations associated with related closed-loop systems, we are able to work entirely within an interpolatory framework, developing algorithms and supporting analysis that do not reference full-order closed-loop Gramians.    This leads to a computationally effective strategy yielding reduced models designed so that the corresponding reduced closed-loop systems will interpolate the full-order closed-loop system at specially adapted interpolation points, without requiring evaluation of the full-order closed-loop system nor even computation of the feedback law that determines it.   
The analytical framework and computational algorithm presented here provides an effective new approach toward constructing reduced-order models for unstable systems.   Numerical examples for an unstable convection diffusion equation and a linearized incompressible Navier-Stokes equation illustrate the effectiveness of this approach.}

\abstract{We formulate here an approach to model reduction that is well-suited for linear time-invariant control systems that are stabilizable and detectable but may otherwise be unstable.   We introduce a modified $\mathcal{H}_2$-error metric, the 
$\mathcal{H}_2$-gap, that provides an effective measure of model fidelity in this setting.   While the direct evaluation of the $\mathcal{H}_2$-gap requires the solutions of a pair of algebraic Riccati equations associated with related closed-loop systems, we are able to work entirely within an interpolatory framework, developing algorithms and supporting analysis that do not reference full-order closed-loop Gramians.    This leads to a computationally effective strategy yielding reduced models designed so that the corresponding reduced closed-loop systems will interpolate the full-order closed-loop system at specially adapted interpolation points, without requiring evaluation of the full-order closed-loop system nor even computation of the feedback law that determines it.   
The analytical framework and computational algorithm presented here provides an effective new approach toward constructing reduced-order models for unstable systems.   Numerical examples for an unstable convection diffusion equation and a linearized incompressible Navier-Stokes equation illustrate the effectiveness of this approach.}

\section{Introduction} 
\label{sec:intro} 

Consider a linear time-invariant (LTI) control system
\begin{equation}\label{eq:sys_o}
  \begin{aligned}
  \dot{\bx}(t)&= \bA \bx(t) + \bB \bu(t), \quad \bx(0)=\bx_0, \\
  \by(t)     &= \bC \bx(t),
  \end{aligned}
\end{equation}
where $\bA \in \IR^{n\times n}$ and $\bB \in \IR^{n\times m }$ and $\bC \in 
\IR^{p\times n}.$   The quantities $\bx(t),\bu(t)$ and $\by(t)$ denote respectively the 
state, control and output of the system, viewed as vector-valued functions of time.     
In practical applications, 
for example when \eqref{eq:sys_o} is obtained by a (method-of-lines) semidiscretization of a 
partial differential equation, the dimension $n$ of the state space can remain very large; 
indeed, often large enough to impede subsequent analysis.
In this case, \textit{model reduction} provides useful tools for constructing simpler 
surrogates or \emph{reduced-order models} (ROMs) that produce dynamics analogous to  \eqref{eq:sys_o}:
\begin{equation}\label{eq:sys_r}
  \begin{aligned}
     \dot{\whx}(t)&= \whA \whx(t) + \whB \bu(t), \quad \whx(0)=\whx_0, \\
  \why(t)     &= \whC \whx(t),
  \end{aligned}
\end{equation}
where $\whA \in \IR^{r\times r}$ and $\whB \in \IR^{r\times m }$ and $\whC \in 
\IR^{p\times r}$ are to be determined so that $r\ll n$ and $\why(t)\approx \by(t)$ for all $t\ge 
0$ and all $\bu \in \mathcal{U},$ where, e.g.,  
$\mathcal{U}=L^2(0,\infty;\mathbb R^m).$ 
Assuming null initial conditions for \eqref{eq:sys_o} and \eqref{eq:sys_r}, we may transform \eqref{eq:sys_o} and \eqref{eq:sys_r}
into the frequency domain and introduce the corresponding transfer functions, $\bG(s)=\bC(s\bI-\bA)^{-1}\bB$ and 
$\whG(s)=\whC(s\bI-\whA)^{-1}\whB$, defined for $s\in \mathbb{C}$ with $\mathsf{Re}(s)>\alpha$ for $\alpha\in\mathbb{R}$ sufficiently large.  Observing that $\bG$ and $\widetilde{\bG}$ are each rational functions of $s$, 
the model reduction problem may be restated as a rational approximation problem 
\begin{align}\label{eq:rat_int_prob}
    \widetilde{\bG}  = 
\argmin_{\substack{\mathrm{dim}(\whG)=r}} 
\| \bG-\whG\|
\end{align}
with respect to an appropriately chosen norm. For example, in case that $\bA$ is asymptotically stable 
(i.e., eigenvalues of $\bA$ lie in the open left half-plane, $\mathbb C_-$), the 
most prominent choices are the $\mathcal{H}_\infty$ and the 
$\mathcal{H}_2$-norms. The $\mathcal{H}_\infty$ and the 
$\mathcal{H}_2$ spaces with the associated norms are given by
%, see (\texttt{add references}).   
{\small 
\begin{align*}
%\label{eq:Hardy_spaces}
 \mathcal{H}_2  &= \left\{ \bF\colon \mathbb C^+ \to \mathbb 
C^{p\times m}\ | \ \bF \text{ is analytic  }  ,   \| \bF\| 
_{\mathcal{H}_2}:= \left( \displaystyle\sup_{\sigma >0 } 
\int_{-\infty}^{\infty} \| \bF(\sigma + \imath \omega ) \| _F ^2 \ \mathrm{d} 
\omega \right)^{\frac{1}{2}}< \infty \right \},  \\
 \mathcal{H}_{\infty} &= \left\{ \bF\colon \mathbb C^+ \to \mathbb 
C^{p\times m}\ | \ \bF \text{ is analytic } ,\| \bF\| 
_{\mathcal{H}_\infty}:=\displaystyle \sup_{z \in \mathbb C^+} \| \bF (z) \| _2 < \infty \right\}.
\end{align*}
}
%\begin{align*}
  %\| \bG \| _{\mathcal{H}_2} &:= \left( \displaystyle\sup_{\sigma >0 } 
%\int_{-\infty}^{\infty} \| \bG(\sigma + \imath \omega ) \| _F ^2 \ \mathrm{d} 
%\omega \right)^{\frac{1}{2}}, \quad  
%%%%
  %\| \bG \| _{\mathcal{H}_\infty} := \displaystyle \sup_{z \in \mathbb C^+} \| \bG (z) \| _2.
%\end{align*}

We focus primarily on cases where $\bA$ is \emph{unstable}, i.e., having at least one eigenvalue in $\mathbb C_+$.  
The complementary case where $\bA$ is asymptotically stable is a standard setting that assures whenever 
$\bu \in L^2(0,\infty;\mathbb R^m)$, then also $\bx \in L^2(0,\infty;\mathbb R^n)$ and $\by \in L^2(0,\infty;\mathbb R^p)$. 
When $\bA$ is unstable, the potential for explosive growth of system state and output may reflect features 
that are fundamental to the modeled dynamics,  either naturally so (e.g., linearizations of energetic gyres in ocean circulation) 
or by design (e.g., agile flight vehicles that depend on active control strategies to survive high-speed maneuvers).

For such systems a variety of model reduction strategies have been proposed. One 
possibility is to consider the model reduction problem on a finite time horizon 
$[0,T_{\max}]$; see for example, \cite{GawJ90,GugA03,Kue18,SinG18}. 
In cases that $\sigma(\bA)\cap \imath\IR = \emptyset,$ an 
alternative approach is to decouple the (unstable) system as $\bG=\bG_s + \bG_u$, into a purely stable and 
a purely  anti-stable part, then perform model reduction on each of these two subsystems, 
as was done e.g., using $\mathcal{H}_2$-optimal interpolation techniques in \cite{MagBG10}.  Yet another approach extends balanced truncation to unstable  systems  \cite{zhou1999balanced,barrachina2005parallel,benner2016} using frequency domain definitions of the system Gramians.
% in connection with the  Bernoulli  equations. 

In this work, we also focus on potentially unstable systems, but we do assume 
that $(\bA,\bB)$ is stabilizable and $(\bA,\bC)$ is detectable, a circumstance that commonly occurs. 
It is well known (\cite{NetJB84}), that in this case it is possible to factorize the transfer function as
$\bG=\bM^{-1}\bN,$ with transfer functions $\bM,\bN\in\mathcal{H}_\infty.$  
By introducing a similar representation for the reduced transfer function, 
$\whG=\whM^{-1} \whN,$ we consider an $\mathcal{H}_2$-type best-approximation 
problem having the form 
\begin{align}\label{eq:H2_gap_intro}
    [\widetilde{\bM},\widetilde{\bN}]   = 
\argmin_{\substack{\mathrm{dim}(\whM)=r\\ \mathrm{dim}(\whN)=r}} \left\| 
[\bM,\bN ]-[\whM,\whN] \right\|_{\mathcal{H}_2},
\end{align}
which can be interpretted as seeking reduced order \emph{factors} 
that are as close as possible to the corresponding factors of the original system. 
We refer to the error measure in (\ref{eq:H2_gap_intro}) as the \textit{$\mathcal{H}_2$-gap}.
Our approach is motivated by the method of linear quadratic 
Gaussian (LQG) balancing (\cite{Cur03,JonS83,Mey90}) which introduces a 
similar approximation problem, but using the $\mathcal{H}_\infty$-norm instead of the 
$\mathcal{H}_2$-norm as we have posed it here. 
Besides the evident applicability to unstable systems that is our focus, the $\mathcal{H}_\infty$-induced metric also carries particular significance for associated closed-loop behavior, see, e.g., \cite{Vid84}.

Starting in Section 2, we will introduce our approximation problem in more detail and review some known results on 
left-coprime factorizations for stabilizable and detectable LTI systems. We will show that our approximation problem 
has a natural connection to an $\mathcal{L}_2(\imath \IR)$-model reduction problem.  Section 3 provides a 
pole-residue expansion for the \textit{$\mathcal{H}_2$-gap} 
 and analyzes individual error terms. We suggest a modification of 
the iterative rational Krylov algorithm (IRKA) in order to eliminate portions of 
this error expression. In Section 4, we consider some numerical examples that suggest 
the competitiveness of our approach relative to existing methods.  Conclusions with 
perspectives for future research are given in Section 5.

\section{Left-coprime factorizations and the gap metric}

In this section, we provide additional details for the specific error measure described in \eqref{eq:H2_gap_intro}. Since we do not assume the system matrix $\bA$ to be asymptotically stable, the $\mathcal{H}_2$-norm of $\bG(\cdot)=\bC(\cdot\bI-\bA)^{-1}\bB$ may not be finite. However, as has been shown in, e.g., \cite[Lemma 6.1]{Vid84}, for a stabilizable and detectable 
LTI system, we can select any matrix $\bF$ so that $\bA_\bF:=\bA-\bF \bC$ is 
asymptotically stable, and then 
%\todocb[inline]{In the following, we only use that $(\mathbf{A},\mathbf{C})$ is detectable ($+$ Sherman-Morrison-Woodbury).  Detectability leads to a left factorization while stabilizability leads to a right factorization - I think this might be worth spelling out - but should we separate the assumptions ?  I think for many of the results either one of the assumptions is sufficient. }
\begin{equation}\label{eq:left_coprime}
 \begin{aligned}
  \bM(s) &= \bI - \bC (s\bI -\bA_\bF)^{-1}\bF  \quad
  \bN(s) = \bC (s \bI-\bA_\bF)^{-1}\bB
 \end{aligned}
\end{equation}
defines a left-coprime factorization of $\bG(s)=\bC(s\bI-\bA)^{-1}\bB.$ 
In particular,
\begin{align}  \label{eq:Gcoprime}
  \bG(s) = [\bM(s)]^{-1}\bN(s), \quad \mbox{where}\quad
 [\bM(s)]^{-1} = \bI + \bC (s\bI -\bA)^{-1} \bF .
\end{align}
In the context of LQG-balanced truncation, e.g., in \cite{Mey90}, the stabilizing 
matrix $\bF$ is been chosen as $\bF=\bP \bC^T$ where $\bP=\bP^T\succeq 0$ 
denotes the solution of the algebraic Riccati equation
\begin{equation}\label{eq:Riccati_o}
  \begin{aligned}
%   \bA^T \bQ + \bQ \bA - \bQ \bB \bB^T\bQ + \bC^T\bC &=0, \\
  \bA \bP + \bP \bA^T - \bP \bC^T \bC\bP + \bB \bB^T &=0.
  \end{aligned}
\end{equation}
Note that asymptotic stability of the system matrix $\bA_\bF$ is ensured by the stabilizability and detectability properties of $(\bA,\bB)$ and $(\bA,\bC)$, respectively. Throughout the rest of this work, we assume that $\bF=\bP\bC^T$ and refer to $\bA_\bF=\bA-\bP \bC^T\bC$ as the \textit{closed-loop} system matrix. 
% Due to the stabilizability and detectability assumptions on the system, it is 
% well known that 
% % $\bA_Q = \bA -\bB \bB^T \bQ$ and 
% $\bA_P=\bA-\bP \bC 
% \bC^T$ is asymptotically stable. Following the idea for LQG-balanced 
% truncation (see e.g. \cite{Cur03}), consider the canonical left-coprime 
% factorization of the transfer function, i.e., $\bG(s)=\bM(s)^{-1}\bN(s)$ where
% \begin{equation}\label{eq:left_coprime}
%   \bM(s) = \bI - \bC(s\bI -\bA_P)^{-1} \bP \bC, \quad \bN(s) = \bC(s\bI 
% - \bA_P)^{-1} \bB. 
% \end{equation}

Similarly, for a stabilizable and detectable reduced system 
$(\whA,\whB,\whC),$ we denote by $\whF,\whM,\whN,\whP$  the equivalent 
reduced-order expressions; in particular,
\begin{align} \label{eq:Ghatcoprime}
  \whG(s) = [\whM(s)]^{-1}\whN(s),~~
  \whM(s) = \bI - \whC (s\bI -\whA_{\whF})^{-1} \whF,~~ \mbox{and} ~~
  \whN(s) = \whC (s \bI-\whA_\bF)^{-1}\whB.
\end{align}
 Moreover, it holds that $\bM,\whM \in \mathcal{H}_\infty$ and $\bN,\whN \in 
\mathcal{H}_\infty \cap \mathcal{H}_2.$ 
Define the two new systems 
\begin{align} \label{eqn:GfGfhat}
 \bG_\bF(s):= [\bM(s),\bN(s)], \ \ \whG_\bF(s):= [\whM(s),\whN(s)].
\end{align}
Using \eqref{eq:left_coprime} and \eqref{eqn:GfGfhat}, we can compute  a state-space realization for $\bG_\bF$:
\begin{align}
\bG_\bF(s) & =  [\bM(s), \bN(s)]  =  [  \bI - \bC (s\bI -\bA_\bF)^{-1}\bF,~  \bC (s \bI-\bA_\bF)^{-1}\bB] \nonumber \\
  & =  [\bI, 0] +  \bC (s\bI -\bA_\bF)^{-1} [-\bF,~\bB]. \label{eqn:statespaceG}
\end{align}
A state-space realization for $\whG_\bF$ can be obtained similarly:
\begin{align}
\whG_\bF(s) =  [\bI, 0] +  \whC (s\bI -\whA_\bF)^{-1} [-\whF,~\whB]. \label{eqn:statespaceGhat}
\end{align}
Note that $\bG_\bF$ and $\whG_\bF$ are dynamical systems with $m+p$ inputs and $p$ outputs. LQG balanced truncation  exploits that the Gramians of $\bG_\bF$ and $\whG_\bF$ are closely related to $\bP,\whP$ and their dual counterparts $\bQ,\whQ$, which satisfy
\begin{align*}
 \bA^T \bQ + \bQ \bA - \bQ \bB \bB^T\bQ + \bC^T\bC &=0\quad\mbox{and} \\
 \whA^T \whQ + \whQ \whA - \whQ \whB \whB^T\whQ + \whC^T\whC &=0. 
\end{align*}
In fact, as shown in \cite{Cur03}, the controllability and observability Gramians $\mathcal{L}_c$ and $\mathcal{L}_o$ of $\bG_\bF$ are given by
\begin{align*}
 \mathcal{L}_c  =\bP, \quad \mathcal{L}_o = \bQ(\bI + \bP \bQ)^{-1}.
\end{align*}
Based on these relations, balancing and truncation with respect to $\bP$ and $\bQ$ allows to construct a reduced-order model that satisfies an error bound of the following type 
\begin{equation}\label{eq:gap_metric_error}
  \left\| [\bM,\bN] - [\whM,\whN] \right\| _{\mathcal{H}_\infty}\le 2 
\sum_{i=r+1}^n \sigma_i,
\end{equation}
where $\sigma_i$ are the Hankel singular values of $\bG_\bF.$ 
%\texttt{Maybe more details on the graph operator and the fact that the above norm is the graph norm? There are more details in \cite{Mey90,Vid84,VidSF82}}. \\[1ex]

While balanced truncation is primarily related to the $\mathcal{H}_\infty$-norm, we are interested in the case when the deviation between $\bG_\bF$ and $\whG_\bF$ is measured using the $\mathcal{H}_2$-norm. Thus we define 
\begin{equation}\label{eq:H2_gap}
 \left\| \bG-\whG \right\|_{\mathcal{H}_2\text{-gap}}:= \left \| 
[\bM,\bN]-[\whM,\whN]\right \|_{\mathcal{H}_2}.
\end{equation}
Note that even though $\bM,\whM \notin \mathcal{H}_2,$ the previous expression is well-defined since  
\begin{align*}
 \bM(\cdot)-\whM(\cdot) = \whC (\cdot\bI -\whA_\bF)^{-1}\whF -\bC (\cdot\bI -\bA_\bF)^{-1}\bF \in 
\mathcal{H}_2.
\end{align*}
 If $\bG$ and $\whG$ have no poles on the imaginary axis, we show below that the $\mathcal{H}_2\text{-gap}$, \eqref{eq:H2_gap}, provides a bound to the $\mathcal{L}_2(\imath \mathbb R)$-error.  For this purpose, as in \cite[Section 5]{Ant05} we define 
 {\small
\begin{align*} 
\mathcal{L}_2(\imath \mathbb R)&:= \left\{ \bH\colon \mathbb C \to \mathbb C^{p \times m}\ | \ \bH \text{ is meromorphic },  \| \bH \| _{\mathcal{L}_2}:=\left( \int_{-\infty}^{\infty} \| \bH(\imath \omega ) \| _F ^2 \ \mathrm{d} \omega \right)^{\frac{1}{2}} < \infty \right \}, \\
\mathcal{L}_\infty(\imath \mathbb R)&:= \left\{ \bH\colon \mathbb C \to \mathbb C^{p \times m}\ | \ \bH \text{ is meromorphic  }, \| \bH \| 
_{\mathcal{L}_\infty}:=\sup_{\omega \in \mathbb R} \sigma_{\mathrm{max}}( \bH(\imath \omega ) ) < \infty \right \}.
\end{align*} 
}
In particular, we have the orthogonal decomposition $\mathcal{L}_2(\imath \mathbb R) = \mathcal{H}_2 (\mathbb C^-) \oplus \mathcal{H}_2(\mathbb C^+).$
 %\texttt{Do we have to introduce/define 
%$\mathcal{L}_\infty(\imath \mathbb R)$ as well?}
%\todosg[inline]{I think we should introduce $\mathcal{L}_\infty(\imath \mathbb R)$ as well. We are using it below. Indeed we should introduce $\mathcal{H}_\infty$ and $\mathcal{H}_2$  as well before they are used in \eqref{eq:gap_metric_error} and \eqref{eq:H2_gap} above.}

Next, we prove a similar result as in \cite[Lemma 2.2]{Par91}.
%The following bound is based on a similar SISO result shown in \cite[Lemma 2.2]{Par91}. 
%\todosg[inline]{Tobi, should we label this as``based on a similar result" or as " we prove a similar result" or " we extend the results of .. "?}
\begin{proposition}\label{prop:l2_bound}
  Let $(\bA,\bB,\bC)$ and $(\whA,\whB,\whC)$ be two stabilizable and 
detectable LTI systems with $\sigma(\bA)\cap \imath \mathbb R =\emptyset = 
\sigma(\whA) \cap \imath \mathbb R.$ Let further $\bG_\bF=[\bM,\bN], 
\whG_\bF=[\whM,\whN]$ be left-coprime factorizations with $\bF=\bP \bC^T$ and 
$\whF=\whP \whC^T.$ Then,
\begin{align} \label{eq:GminusGhatl2}
  \| \bG-\whG\| _{\mathcal{L}_2} &\le \| \whM^{-1}\|  
_{\mathcal{L}_\infty} \left( \| \bG\| _{\mathcal{L}_\infty} \| \bM -\whM 
\|_{\mathcal{H}_2}    + \|\bN-\whN\|_{\mathcal{H}_2} \right),
\end{align}
and, thus
\begin{align} \label{eq:GminusGhatl2viagap}
  \| \bG-\whG\| _{\mathcal{L}_2} &\le \| \whM^{-1}\|  _{\mathcal{L}_\infty} \left( 1+ \| \bG\| _{\mathcal{L}_\infty}  \right)\left\| \bG-\whG \right\|_{\mathcal{H}_2\text{-gap}}.
\end{align}
\end{proposition}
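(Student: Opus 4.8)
The plan is to first derive a clean algebraic identity expressing $\bG-\whG$ through the factor errors $\bM-\whM$ and $\bN-\whN$, and then to bound the resulting terms using the compatibility of the $\mathcal{L}_\infty$- and $\mathcal{L}_2$-norms under multiplication. Starting from the coprime factorizations $\bG=\bM^{-1}\bN$ and $\whG=\whM^{-1}\whN$ (equivalently $\bN=\bM\bG$), I would insert $\whM^{-1}\whM=\bI$ ahead of $\bM^{-1}\bN$ and then add and subtract $\bM\bG=\bN$ to obtain
\begin{align*}
  \bG-\whG = \whM^{-1}\big(\whM-\bM\big)\bG + \whM^{-1}\big(\bN-\whN\big).
\end{align*}
This identity is the crux of the argument: it isolates the two factor errors while exhibiting $\whM^{-1}$ and $\bG$ merely as multipliers.

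Next I would invoke the elementary inequality $\|\bF_1\bH\bF_2\|_{\mathcal{L}_2}\le\|\bF_1\|_{\mathcal{L}_\infty}\|\bH\|_{\mathcal{L}_2}\|\bF_2\|_{\mathcal{L}_\infty}$, valid for $\bF_1,\bF_2\in\mathcal{L}_\infty(\imath\IR)$ and $\bH\in\mathcal{L}_2(\imath\IR)$; it follows pointwise in $\omega$ from $\|\bA\bB\|_F\le\sigma_{\max}(\bA)\|\bB\|_F$ together with the analogous bound for right multiplication, and then integration over $\omega$. The hypothesis $\sigma(\bA)\cap\imath\IR=\emptyset=\sigma(\whA)\cap\imath\IR$ ensures that $\bG$ and $\whM^{-1}$ have no poles on the imaginary axis (the poles of $\whM^{-1}$ being the eigenvalues of $\whA$), hence both lie in $\mathcal{L}_\infty(\imath\IR)$. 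Applying the inequality to each of the two terms above, and using that $\bM-\whM$ and $\bN-\whN$ lie in $\mathcal{H}_2$ so that their boundary $\mathcal{L}_2$-norms agree with their $\mathcal{H}_2$-norms, produces precisely \eqref{eq:GminusGhatl2}.

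Finally, \eqref{eq:GminusGhatl2viagap} follows from \eqref{eq:GminusGhatl2} by relating the individual factor errors to the $\mathcal{H}_2$-gap. Since $[\bM,\bN]-[\whM,\whN]=[\,\bM-\whM,\ \bN-\whN\,]$ is a block row, its squared Frobenius norm splits frequency-wise as a sum of squares, giving $\|\bG-\whG\|_{\mathcal{H}_2\text{-gap}}^2=\|\bM-\whM\|_{\mathcal{H}_2}^2+\|\bN-\whN\|_{\mathcal{H}_2}^2$; in particular, each of $\|\bM-\whM\|_{\mathcal{H}_2}$ and $\|\bN-\whN\|_{\mathcal{H}_2}$ is dominated by $\|\bG-\whG\|_{\mathcal{H}_2\text{-gap}}$. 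Substituting both bounds into \eqref{eq:GminusGhatl2} and factoring out the common gap term yields the claim. I expect no serious obstacle here; the only points requiring care are purely bookkeeping---keeping the two $\mathcal{L}_\infty$-multipliers on the correct sides of the $\mathcal{L}_2$-factor, and justifying the passage from the $\mathcal{H}_2$-norm (a supremum over vertical lines) to the boundary $\mathcal{L}_2(\imath\IR)$-norm for the stable differences $\bM-\whM$ and $\bN-\whN$.
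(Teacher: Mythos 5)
Your proposal is correct and follows essentially the same route as the paper: the identity $\bG-\whG=\whM^{-1}\left((\whM-\bM)\bG+(\bN-\whN)\right)$, the $\mathcal{L}_\infty$--$\mathcal{L}_2$ submultiplicativity, the observation that $\bM-\whM,\bN-\whN\in\mathcal{H}_2$, and the domination of each factor error by the gap. Your explicit justification of the right-multiplication bound for the term $(\whM-\bM)\bG$ is a small point the paper leaves implicit, but the argument is the same.
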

\begin{proof}
  By using the left-coprime factorizations of $\bG$ and $\whG$, we obtain  
  \begin{align*}
    \bG-\whG &= \bG-\whM^{-1}\whN 
    = \whM^{-1} (\whM \bG - \whN ) \\
    &= \whM^{-1} (\whM \bG - \bM \bM^{-1} \bN + \bN - \whN ) \\
    &= \whM^{-1} \left( (\whM-\bM) \bG + (\bN -\whN) \right).
  \end{align*}
Since $\bA$ and $\whA$ are assumed to have no purely imaginary eigenvalues,  we know that $\bG \in \mathcal{L}_\infty(\imath \mathbb R)$ and $\whM^{-1}(\cdot) = \bI + \whC(\cdot\bI-\whA)^{-1}\whF \in \mathcal{L}_\infty(\imath \mathbb R).$ The  assertion \eqref{eq:GminusGhatl2} then follows from the fact that $\| \bG \bH \| _{\mathcal{L}_2} \le \| \bG \| _{\mathcal{L}_\infty} \| \bH \| _{\mathcal{L}_2}$ for all $\bG \in \mathcal{L}_\infty$ and $\bH \in \mathcal{L}_2.$ In particular, note that $\bM-\whM \in \mathcal{H}_2$ and $\bN-\whN\in \mathcal{H}_2$, which implies $  \| \bM-\whM \|_{\mathcal{L}_2} = \| \bM -\whM \|_{\mathcal{H}_2},  \  \| \bN-\whN \|_{\mathcal{L}_2} = \| \bN -\whN \|_{\mathcal{H}_2}. $  Finally, the assertion \eqref{eq:GminusGhatl2viagap} directly follows from \eqref{eq:GminusGhatl2} and the definition of the $\mathcal{H}_2$-gap in \eqref{eq:H2_gap}.
\end{proof}

\begin{remark}
  Note that if we split $\bG=\bG_s+\bG_u$ and $\whG=\whG_s+\whG_u$ into their 
stable and unstable parts and use the orthogonal decomposition $\mathcal{L}_2(\imath 
\mathbb R) = \mathcal{H}_2 (\mathbb C_-) \oplus 
\mathcal{H}_2(\mathbb C_+),$ we also obtain bounds for 
$\|\bG_s-\whG_s\|_{\mathcal{H}_2(\mathbb C_+)}$ and 
$\|\bG_u-\whG_u\|_{\mathcal{H}_2(\mathbb C_-)}$: 
\begin{align*}
\max\left(\|\bG_s-\whG_s\|_{\mathcal{H}_2(\mathbb C_+)},\|\bG_u-\whG_u\|_{\mathcal{H}_2(\mathbb C_-)}\right) \le \| \whM^{-1}\|  _{\mathcal{L}_\infty} \left( 1+ \| \bG\| _{\mathcal{L}_\infty}  \right)\left\| \bG-\whG \right\|_{\mathcal{H}_2\text{-gap}}.
\end{align*}
\end{remark}
%\todosg[inline]{Should we write them out explicitly?}
Proposition \ref{prop:l2_bound}  bounds the $\mathcal{L}_2$ distance between the full model 
$\bG(s)$ and the reduced model $\whG(s)$ with the $\mathcal{H}_2$\text{-gap} distance between
full closed-loop system $\bG_\bF(s)$ and the reduced one $\whG_\bF(s)$. This immediately motivates a model reduction approach  in which one tries to minimize the $\mathcal{H}_2$\text{-gap}. This is what we investigate next.

\section{$\mathcal{H}_2\text{-gap}$ model reduction}

%\todosg[inline]{Note to self: In this section, we achieve/show ....}

In this section, we analyze the $\mathcal{H}_2$\text{-gap} in more detail. We begin with the derivation of a pole-residue formula that extends the one discussed for the standard case in, e.g., \cite{GugAB08}. Subsequently, we discuss the individual error terms from a rational interpolation-based perspective which suggests the use of an iterative algorithm generalizing IRKA.

\subsection{Pole-residue formulae for the $\mathcal{H}_2\text{-gap}$ measure}
Recall the state-space representation of $\bG_\bF(s)$:
\begin{align}
\bG_\bF(s)  & =  [\bI, 0] +  \bC (s\bI -\bA_\bF)^{-1} [-\bF,~\bB]  \tag{\ref{eqn:statespaceG}}.
\end{align}
Let  $\bw_i$  denote the left eigenvector of 
$\bA_\bF$ associated with the eigenvalue $\lambda_i$, i.e., $\bw_i^T \bA_\bF = \lambda_i \bw_i^T$\footnote{We define
the left eigenpair via the relationship $\bw_i^T \bA_\bF = \lambda_i \bw_i^T$ as opposed to the more usual definition: $\bw_i^* \bA_\bF = \lambda_i \bw_i^*$. Even though $\bw_i$ is potentially a complex vector,  the version we adopt eliminates the need to use $\overline{\bw}_i$ in many definitions and equations that follow.}. For simplicity of presentation, assume the poles  $\lambda_i$  are semi-simple and write
$\bW^T \bA_\bF = \bLamb \bW^T$ where $\bW = [\bw_1,\,\bw_2,\,\ldots,\,\bw_n] \in \IC^{n\times n}$ 
and $\bLamb = \textsf{diag}(\lambda_1,\,\lambda_2,\,\ldots,\lambda_n)\in \IC^{n\times n}$.  Define $\bV = \bW^{-T}$.
Then, a state-space transformation by $\bW^T$ in \eqref{eqn:statespaceG} yields the pole-residue representation of $\bG_\bF(s)$: 
\begin{align} \label{eqn:GfGpr}
 \bG_\bF(s) &=  [\bI,0]+\sum_{i=1}^n 
\frac{\bc_i [\bff_i^T,\bb_i^T]}{s-\lambda_i},
\end{align}
where
\begin{equation}\label{eq:res_dir_GF}
  \bb_i =\bB^T \bw_i, \quad   \bff_i =-\bF^T\bw_i=-\bC \bP \bw_i,\quad\mbox{and}\quad\bc_i = \bC \bv_i.
\end{equation}
We follow the same line of arguments to obtain the pole-residue representation of $\whG_\bF$. 
Let $\whw_j$ be the left eigenvector of 
$\whA_\bF$ associated with the eigenvalue $\widehat{\lambda}_j$. Assume the poles  $\widehat{\lambda}_j$  are semi-simple, and define
$\whW = [\whw_1,\,\whw_2,\,\ldots,\,\whw_r] \in \IC^{r\times r}$  and $\whV = \whW^{-T}$. Then,
\begin{align} \label{eqn:GfGfhatpr}
 \whG_\bF(s)  =[\bI,0]+\sum_{j=1}^r 
\frac{\whc_j [\whf_j^T,\whb_j^T]}{s-\widehat{\lambda}_j},
\end{align}
where
\begin{equation}\label{eq:res_dir}
  \whb_j =\whB^T \whw_j, \quad   \whf_j=-\whF^T\whw_j =-\whC \whP \whw_j,\quad\mbox{and}\quad\whc_j = \whC \whv_j.
\end{equation}
Now, using the  representations \eqref{eqn:GfGpr} and \eqref{eqn:GfGfhatpr}, we extend the pole-residue based formula for the $\mathcal{H}_2$-norm to the $\mathcal{H}_2\text{-gap}$ norm.
\begin{proposition}\label{prop:cph2_res}
Let $\bG$ and $\whG$ be stabilizable detectable with  coprime factorizations in \eqref{eq:Gcoprime} and \eqref{eq:Ghatcoprime}. Further, let $\bG_\bF$ and $\whG_\bF$ have the pole-residue representations in \eqref{eqn:GfGpr} and \eqref{eqn:GfGfhatpr}. Then,
%  \begin{align}  
% \|\bG-\whG \|_{\mathcal{H}_2\text{-gap}}^2 &= \sum_{i=1}^n 
%\bc_i^T(\bG_\bF(-\lambda_i) 
%-\whG_\bF(-\lambda_i) ) \begin{bmatrix} \bff_i \\ \bb_i \end{bmatrix} \\ 
%%%
%&\qquad + \sum_{j=1}^r \whc_j^T 
%(\whG_\bF(-\widehat{\lambda}_{j}) - \bG_\bF (-\widehat{\lambda}_{j}) ) 
%\begin{bmatrix} \whf_j \\ \whb_j  \end{bmatrix}.
%\end{align}
  \begin{align}   \label{eq:H2gappr}
  \begin{array}{c}
  {\displaystyle 
 \|\bG-\whG \|_{\mathcal{H}_2\text{-gap}}^2 = \sum_{i=1}^n 
\bc_i^T(\bG_\bF(-\lambda_i) 
-\whG_\bF(-\lambda_i) ) \begin{bmatrix} \bff_i \\ \bb_i \end{bmatrix} }
 \\ 
  {\displaystyle   \qquad\hspace{1.5in} + \sum_{j=1}^r \whc_j^T 
(\whG_\bF(-\widehat{\lambda}_{j}) - \bG_\bF (-\widehat{\lambda}_{j}) ) 
\begin{bmatrix} \whf_j \\ \whb_j  \end{bmatrix}.}
\end{array}
\end{align}
\end{proposition}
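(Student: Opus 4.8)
The plan is to reduce the claim to a single $\mathcal{H}_2$-norm and then evaluate that norm by residues. By the definition \eqref{eq:H2_gap} of the $\mathcal{H}_2$-gap, $\|\bG-\whG\|_{\mathcal{H}_2\text{-gap}}^2 = \|\bG_\bF-\whG_\bF\|_{\mathcal{H}_2}^2$. Setting $\bE:=\bG_\bF-\whG_\bF$ and subtracting \eqref{eqn:GfGpr} from \eqref{eqn:GfGfhatpr}, the two constant terms $[\bI,0]$ cancel, so that
\begin{align*}
\bE(s) = \sum_{i=1}^n \frac{\bc_i[\bff_i^T,\bb_i^T]}{s-\lambda_i} - \sum_{j=1}^r \frac{\whc_j[\whf_j^T,\whb_j^T]}{s-\widehat{\lambda}_j}
\end{align*}
is a strictly proper rational matrix function with real coefficients, whose poles are the eigenvalues $\{\lambda_i\}$ of $\bA_\bF$ and $\{\widehat{\lambda}_j\}$ of $\whA_\bF$. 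In particular $\bE\in\mathcal{H}_2$, and all its poles lie in $\mathbb{C}_-$, since the hypothesis $\sigma(\bA)\cap\imath\IR=\emptyset=\sigma(\whA)\cap\imath\IR$ forces $\bA_\bF$ and $\whA_\bF$ to be asymptotically stable.

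Next I would write the squared norm as a contour integral. Because $\bE$ has a real realization, $\bE(\imath\omega)^*=\bE(-\imath\omega)^T$ on the imaginary axis, so that
\begin{align*}
\|\bE\|_{\mathcal{H}_2}^2 = \int_{-\infty}^\infty \Tr\!\left(\bE(\imath\omega)^*\bE(\imath\omega)\right)\mathrm{d}\omega = \int_{-\infty}^\infty \Phi(\imath\omega)\,\mathrm{d}\omega, \qquad \Phi(s):=\Tr\!\left(\bE(-s)^T\bE(s)\right).
\end{align*}
The rational function $\Phi$ carries the poles $\{\lambda_i\},\{\widehat{\lambda}_j\}\subset\mathbb{C}_-$ of the factor $\bE(s)$ together with their reflections $\{-\lambda_i\},\{-\widehat{\lambda}_j\}\subset\mathbb{C}_+$ coming from $\bE(-s)^T$; the two groups are separated by the imaginary axis precisely because the poles of $\bE$ are strictly stable. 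Since $\bE$ is strictly proper, $\Phi(s)=O(|s|^{-2})$ as $|s|\to\infty$, so closing the path by a large semicircle in the left half-plane contributes nothing in the limit, and the residue theorem gives
\begin{align*}
\|\bE\|_{\mathcal{H}_2}^2 = \sum_{i=1}^n \res_{s=\lambda_i}\Phi(s) + \sum_{j=1}^r \res_{s=\widehat{\lambda}_j}\Phi(s),
\end{align*}
where the $\mathcal{H}_2$-inner-product normalization is fixed so that the leading constant is $1$.

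Finally I would evaluate the residues. At $s=\lambda_i$ only the factor $\bE(s)$ is singular, with simple-pole residue $\bc_i[\bff_i^T,\bb_i^T]$ (here semisimplicity of the poles is used), while $\bE(-s)^T$ is analytic there with value $\bE(-\lambda_i)^T$; using the cyclic property of the trace and transposing the resulting scalar,
\begin{align*}
\res_{s=\lambda_i}\Phi(s) = \Tr\!\left(\bE(-\lambda_i)^T\,\bc_i[\bff_i^T,\bb_i^T]\right) = \bc_i^T\bE(-\lambda_i)\begin{bmatrix}\bff_i\\\bb_i\end{bmatrix} = \bc_i^T\big(\bG_\bF(-\lambda_i)-\whG_\bF(-\lambda_i)\big)\begin{bmatrix}\bff_i\\\bb_i\end{bmatrix}.
\end{align*}
The residue at $\widehat{\lambda}_j$ is identical in form, except that the residue of $\bE(s)$ there carries the opposite sign $-\whc_j[\whf_j^T,\whb_j^T]$, which turns $-\bE(-\widehat{\lambda}_j)$ into $\whG_\bF(-\widehat{\lambda}_j)-\bG_\bF(-\widehat{\lambda}_j)$ and reproduces the second sum in \eqref{eq:H2gappr}. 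Substituting both families of residues yields the claim.

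I expect the main difficulty to be bookkeeping rather than conceptual. One must justify pairing the conjugate transpose with $\bE(-s)^T$ (valid only thanks to the real realization and the conventions fixed in the footnote), confirm that the poles of the two factors genuinely sit on opposite sides of the imaginary axis so that exactly the left-half-plane poles $\{\lambda_i,\widehat{\lambda}_j\}$ are enclosed, and carefully track the sign together with the trace/transpose manipulation; the overall normalization constant in front of the integral must likewise be reconciled with the stated formula.
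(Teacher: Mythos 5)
Your argument is correct and follows essentially the same route as the paper: both reduce the $\mathcal{H}_2$-gap to $\|\bG_\bF-\whG_\bF\|_{\mathcal{H}_2}^2$, observe that the constant terms cancel so the error is a strictly proper function with (semi-simple) poles $\{\lambda_i\}\cup\{\widehat{\lambda}_j\}$, and apply the pole-residue formula for the $\mathcal{H}_2$-norm --- the only difference being that the paper cites this formula (\cite[Lemma 1.1]{AntBG10}) while you re-derive it by contour integration, modulo the $2\pi$ normalization you acknowledge. One small correction: the asymptotic stability of $\bA_\bF$ and $\whA_{\whF}$ does not follow from $\sigma(\bA)\cap\imath\IR=\emptyset$ (which is not even a hypothesis here), but from stabilizability/detectability together with the Riccati-based choice $\bF=\bP\bC^T$, as noted after \eqref{eq:Riccati_o}.
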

\begin{proof}
  First, we rewrite the error  as
  \begin{align*}
    \|\bG-\whG \|_{\mathcal{H}_2\text{-gap}}^2 = \| 
\bG_\bF-\whG_\bF\|_{\mathcal{H}_2}^2 = \| (\bG_\bF-[\bI,0])+([\bI,0]-\whG_\bF)  
\|_{\mathcal{H}_2}^2.
  \end{align*}
Note that $(\bG_\bF-[\bI,0]) \in \mathcal{H}_2$ and $([\bI,0]-\whG_\bF) \in 
\mathcal{H}_2$. For a transfer function $\bH \in \mathcal{H}_2$ with pole-residue representation $\bH(s) = \sum_{k=1}^n \frac{\bh_k \bg_k^T}{s-\mu_i}$, the $\mathcal{H}_2$ norm satisfies
$
\| \bH  \|_{{\mathcal{H}_2}}^2 = \sum_{k=1}^n\bh_k^T\bH(-\mu_k) \bg_k; 
$
see, e.g., \cite[Lemma 
2.4]{GugAB08} for the SISO version and \cite[Lemma 1.1]{AntBG10} for the MIMO version.
Then, the  result \eqref{eq:H2gappr} follows from applying this  $\mathcal{H}_2$ norm formula to the pole-residue representation
of $\bG_\bF-\whG_\bF$, which can be obtained from
   \eqref{eqn:GfGpr} and \eqref{eqn:GfGfhatpr} by eliminating the leading (constant) terms.
 \end{proof}

\subsection{$\mathcal{H}_2\text{-gap}$ formula and interpolation.}
%\todo[inline]{In this section, we first eliminate $G_F$ from the formula and write everything in terms $G$ and $\hat{G}$. This will then allow us to eliminate parts of the error terms without ever needing $P$.}

Proposition \ref{prop:cph2_res} reveals two components that contribute to the $\htwogap$, in a way that is similar to the standard $\mathcal{H}_2$-error measure,  The first component is due to the mismatch of the transfer functions  $\bG_\bF$ and $\whG_\bF$ at the mirror images of the \emph{full-order closed-loop poles}, $\lambda_i$, and the second component reflects the mismatch at the mirror images of the \emph{reduced-order closed-loop poles}, $\widehat{\lambda}_i$. In order to reduce the $\htwogap$, a reasonable approach might be to eliminate terms from these two components. For example, if one enforces $(\bG_\bF(-\lambda_i)  -\whG_\bF(-\lambda_i) ) \begin{bmatrix} \bff_i \\ \bb_i \end{bmatrix}  = 0$, then the $i$th term from the first term will be eliminated. This condition is referred to as \emph{right-tangential interpolation}; more specifically, we state $\whG_\bF(s)$ tangentially interpolates $\bG_\bF(s)$ at the interpolation point $-\lambda_i$ along the right-tangential direction $ \begin{bmatrix} \bff_i \\ \bb_i \end{bmatrix} $; see \cite{AntBG10,AntBG20} for further details. We can  eliminate the $i$th term in the first sum by enforcing $\bc_i^T(\bG_\bF(-\lambda_i)-\whG_\bF(-\lambda_i)) = 0$ as well. This is referred to as left-tangential interpolation. The terms in the second sum can be similarly eliminated. This interpretation of the  $\cH_2$-error norm and elimination of the error terms via interpolation have been proposed in the regular $\cH_2$-error measure \cite{Gug03}. Since the second-term depends on the reduced-model to-be-computed and are not known a priori, \cite{Gug03} proposed eliminating the dominant terms from the first term. Even though this is not an optimal reduction strategy, this approach has worked well in various examples. The situation is rather different here. 

In order to minimize the $\htwogap$, we construct a reduced model $\whG$ from $\bG$.   Yet Proposition \ref{prop:cph2_res} shows that the error depends on $\bG_\bF$ and $\whG_\bF$, which we do not have direct access to. Consider the $\htwogap$ once again: $\|\bG  - \whG\|_{\htwogap} =\|\bG_\bF  - \whG_\bF\|_{\htwo}.$ In order to minimize the $\htwogap$, suppose we perform an optimal $\htwo$ reduction on $\bG_\bF$, and let $\bV \in \mathbb{R}^{n \times r}$ and $\bW\in \mathbb{R}^{n \times r}$ be the corresponding optimal model reduction bases with $\bW^T\bV = \bI$. The state-space representation for the reduced $\whG$ is given by
\begin{align*}
\whG_\bF(s) & = 
  [\bI, 0] +  \whC (s\bI -\whA_\bF)^{-1} [-\whF,~\whB]  %\\ &=  [\bI, 0] +  \bC\bV (s\bI -\bW^T\bA_\bF\bV)^{-1} [-\bW^T\bF,~\bW^T\bB] 
  \\ &=  [\bI, 0] +  \bC\bV (s\bI -\bW^T\bA \bV - \bW^T\bF\bC\bV)^{-1} [-\bW^T\bF,~\bW^T\bB] .
\end{align*}
We need to extract the corresponding reduced model $\whG$ from this reduced closed-loop model $\whG_\bF(s)$. One might reasonably assume that $\whC = \bC \bV $, $\whB = \bW^T\bB$, $\whF = \bW^T\bF$, and $\whA = \bW^T \bA \bV$. However,  these reduced quantities need also to satisfy $\whF = \bW^T \bF = \bW^T \bP \bC = \whP \whC^T$ where $ \whP$ solves
$
  \whA \whP + \whP \whA^T - \whP \whC^T \whC\whP + \whB \whB^T =0.
$
Clearly, this is not true in general and we cannot expect to extract a reduced system $\whG$ that would have created the reduced closed-loop model $\whG_\bF(s)$. A similar issue arises in the weighted $\htwo$ model reduction problem, where, given a weighting functions $\bW_o(s)$,  one tries to minimize the weighted error $\| \bW_o (\bG - \whG ) \|_{\htwo}$. As \cite{AniBGA13} and \cite{BreBG15} prove, in the weighted-$\htwo$ problem, the error (and optimality) requires that a function of  $\bG$ interpolates a function of  $\whG$, leading to the same issue that we encounter here. 

To address this issue at least partially, Lemmas \ref{lem:res_and_G}--\ref{lem:G_vs_GP2} enable us to rewrite the $\htwogap$ formula in \eqref{eq:H2gappr} as a function of $\bG$ and $\whG$. The result  in Theorem \ref{thm:main_result} will, then, form the foundation of the proposed method outlined in Algorithm \ref{alg:gap_irka}.

\begin{lemma}\label{lem:res_and_G}
Let $\bG$  be a stabilizable and detectable linear system with  coprime factorization in \eqref{eq:Gcoprime}. Further, let $\bG_\bF = [\bM~\bN]$  have the pole-residue representation 
$\bG_\bF(s)=[\bI,0]+\sum_{i=1}^n \frac{\bc_i [\bff_i^T,\bb_i^T]}{s-\lambda_i}$ as  in \eqref{eqn:GfGpr}. 
%Let $(\bA,\bB,\bC)$ denote a stabilizable (and detectable ???) linear system
%% in 
%% pole residue form  
%with 
%$\bG_\bF(s)=[\bI,0]+\sum_{i=1}^n \frac{\bc_i 
%[\bff_i^T,\bb_i^T]}{s-\lambda_i}$
%% and $ \whG_P(s) =\sum_{j=1}^r 
%% \frac{\whc_j 
%% [\whb_{1,j}^T,\whb_{2,j}^T]}{s-\widehat{\lambda}_j}$ 
%% be given
%in pole  residue form. 
Assume that $\sigma(\bA)\cap \sigma(-\bA_\bF) = \emptyset.$
% = 
% \sigma(\whA)\cap \sigma(-\whA_P).$
Then, 
% and $j=1,\dots,r$ 
 \begin{align} \label{eq:Gandf}
  \bG(-\lambda_i)\bb_i &= - \bff_i \qquad\mbox{and}\qquad
   \bG_\bF(-\lambda_i) \begin{bmatrix} \bff_i 
\\ \bb_i  \end{bmatrix}=0,~~\mbox{for}~~i=1,\dots,n.
 \end{align}
\end{lemma}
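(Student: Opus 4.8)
The plan is to reduce both identities to a single algebraic consequence of the Riccati equation \eqref{eq:Riccati_o}, namely the relation $\bB\bB^T\bw_i=(-\lambda_i\bI-\bA)\bP\bw_i$. First I would record the invertibility facts guaranteed by the hypothesis: since $\lambda_i\in\sigma(\bA_\bF)$ we have $-\lambda_i\in\sigma(-\bA_\bF)$, so the assumption $\sigma(\bA)\cap\sigma(-\bA_\bF)=\emptyset$ gives $-\lambda_i\notin\sigma(\bA)$; thus $(-\lambda_i\bI-\bA)$ is nonsingular and $\bG(-\lambda_i)$ is well-defined. Moreover, because $\bA_\bF$ is asymptotically stable each $\lambda_i$ lies in $\mathbb C_-$, so $-\lambda_i\in\mathbb C_+$ cannot be an eigenvalue of $\bA_\bF$, making $\bM(-\lambda_i)$, $\bN(-\lambda_i)$, and hence $\bG_\bF(-\lambda_i)$ all well-defined.

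The crux is establishing the key identity. I would start from the left-eigenvector relation $\bw_i^T\bA_\bF=\lambda_i\bw_i^T$, write $\bA_\bF=\bA-\bP\bC^T\bC$, and transpose (using $\bP^T=\bP$) to obtain $\bA^T\bw_i=\lambda_i\bw_i+\bC^T\bC\bP\bw_i$. Then multiply the Riccati equation \eqref{eq:Riccati_o} on the right by $\bw_i$ and substitute this expression for $\bA^T\bw_i$; the two $\bP\bC^T\bC\bP\bw_i$ terms cancel, leaving $(\bA+\lambda_i\bI)\bP\bw_i+\bB\bB^T\bw_i=0$, i.e.,
\[ \bB\bB^T\bw_i=(-\lambda_i\bI-\bA)\bP\bw_i. \]
I expect this cancellation to be the only real obstacle — it just requires keeping the transposes straight and exploiting symmetry of $\bP$ — after which everything is immediate.

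With the key identity in hand, the first claim follows by applying $(-\lambda_i\bI-\bA)^{-1}$ and then $\bC$: since $\bb_i=\bB^T\bw_i$,
\[ \bG(-\lambda_i)\bb_i=\bC(-\lambda_i\bI-\bA)^{-1}\bB\bB^T\bw_i=\bC\bP\bw_i=-\bff_i, \]
recalling that $\bff_i=-\bC\bP\bw_i$. For the second claim I would avoid manipulating $\bG_\bF$ directly and instead exploit the coprime factorization: since $\bG=\bM^{-1}\bN$ we have $\bN=\bM\bG$ as matrix functions, so $\bN(-\lambda_i)=\bM(-\lambda_i)\bG(-\lambda_i)$. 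Therefore
\[ \bG_\bF(-\lambda_i)\begin{bmatrix}\bff_i\\\bb_i\end{bmatrix}=\bM(-\lambda_i)\bff_i+\bN(-\lambda_i)\bb_i=\bM(-\lambda_i)\big(\bff_i+\bG(-\lambda_i)\bb_i\big)=0, \]
where the last equality uses the first claim, $\bG(-\lambda_i)\bb_i=-\bff_i$. This completes both parts.
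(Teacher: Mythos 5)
Your proposal is correct and follows essentially the same route as the paper: both derive $\bP\bw_i=(-\lambda_i\bI-\bA)^{-1}\bB\bB^T\bw_i$ from the Riccati equation (the paper by first rewriting it as the Sylvester equation $\bA\bP+\bP\bA_\bF^T+\bB\bB^T=0$, you by substituting $\bA^T\bw_i=\lambda_i\bw_i+\bC^T\bC\bP\bw_i$ and cancelling — the same computation), and both obtain the second identity by factoring out $\bM(-\lambda_i)$ and invoking the first. Your explicit justification of the invertibility of $(-\lambda_i\bI-\bA)$ from the spectral hypothesis is a welcome addition the paper leaves implicit.
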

\begin{proof}
Let $\bw_i$  denote the left eigenvector of 
$\bA_\bF$ associated with the eigenvalue $\lambda_i$, i.e., $\bw_i^T \bA_\bF = \lambda_i \bw_i^T$.
 Definition of $\bb_i$ in \eqref{eq:res_dir_GF} yields
 \begin{align}\label{eq:aux1}
   \bG(-\lambda_i)\bb_i = 
%    \lim_{s\to \lambda_i}(s-\lambda_i) 
\bC(-\lambda_i \bI -\bA)^{-1} \bB \bB^T\bw_i.
 \end{align}
% Since $\bw_i$ is a left eigenvector of $\bfA_P,$ we conclude that
% \begin{align*}
%     (s\bfI- \bfA_P^T) \bw_i = 
% (s-\lambda_i) \bw_i 
% \end{align*}
% and, as a consequence, we obtain
% \begin{align}\label{eq:aux1}
% \bfG(-\lambda_i)\bfb_{1,i} = \lim_{s\to \lambda_i}(s-\lambda_i) 
% \bfC(-\lambda_i \bfI -\bfA)^{-1} \bfB \bfB^T \frac{\bw_i}{s-\lambda_i}.
% \end{align}
Using the fact that $\bA_\bF = \bA - \bP\bC^T\bC$, the Riccati equation \eqref{eq:Riccati_o} can be rewritten as the  Sylvester equation 
$
 \bA \bP + \bP \bA_\bF^T + \bB\bB^T=0.
$
Postmultiplication with $\bw_i$ then yields 
\begin{align*}
  \bP \bw_i = (-\lambda_i \bI -\bA)^{-1} \bB \bB^T \bw_i.
\end{align*}
Inserting this last expression into \eqref{eq:aux1} leads to 
\begin{align*}
 \bG(-\lambda_i)\bb_i=\bC \bP \bw_i=-\bff_i,
 \end{align*}
 which proves the first assertion in \eqref{eq:Gandf}.
% The assertion for the reduced-order model follows by the same arguments. 
%\end{proof}
%
%
%\begin{lemma}\label{lem:G_vs_GP1}
%Let $(\bA,\bB,\bC)$ denote a stabilizable and detectable linear system  
%with 
%$\bG_\bF(s)=[\bI,0]+\sum_{i=1}^n \frac{\bc_i 
%[\bff_i^T,\bb_i^T]}{s-\lambda_i}$ 
%in pole  residue form. 
%Assume that $\sigma(\bA)\cap \sigma(-\bA_\bF) = \emptyset.$ 
%Then, for all $i=1,\dots,n$ 
%it holds that $  \bG_\bF(-\lambda_i) \begin{bmatrix} \bff_i 
%\\ \bb_i  \end{bmatrix}=0.$
%\end{lemma}
%\begin{proof}
To prove the second assertion, we compute
\begin{align*}
 \bG_\bF(-\lambda_i) \begin{bmatrix} \bff_i 
\\ \bb_i \end{bmatrix} = 
[ \bM(-\lambda_i)~ \bN(-\lambda_i)] \begin{bmatrix} \bff_i 
\\ \bb_i \end{bmatrix}
 &= \bM(-\lambda_i) \bff_i + \bN(-\lambda_i) 
\bb_i   \\
&= \bM(-\lambda_i)( \bff_i + [\bM(-\lambda_i)]^{-1}\bN(-\lambda_i) \bb_i )  \\
&= \bM(-\lambda_i)(\bff_i + \bG(-\lambda_i) \bb_i) = 0,
\end{align*}
where, in the last step we used the just-proven first assertion in 
\eqref{eq:Gandf}.
\end{proof}

\begin{lemma}\label{lem:G_vs_GP2}
Let $\bG$  be a stabilizable and detectable linear system with the closed-loop transfer function $\bG_\bF(s)=[\bM~\bN]= [\bI,0]+\sum_{i=1}^n \frac{\bc_i [\bff_i^T,\bb_i^T]}{s-\lambda_i}$
as  in \eqref{eqn:GfGpr}. Let $\whG_\bF=[\whM~\whN]$ denote the closed-loop transfer function of a stabilizable and detectable system reduced model $\whG$. If $\sigma(\bA)\cap \ \sigma(-\bA_\bF) = \emptyset$, then
  \begin{align} \label{eq:MhatGGhat}
    \whG_\bF(-\lambda_i)\begin{bmatrix}\bff_i \\ \bb_i \end{bmatrix}&=  -
\whM(-\lambda_i) \left( \bG(-\lambda_i) - \whG(-\lambda_i) \right)\bb_i,
~~\mbox{for}~~
i=1,\dots,n.
  \end{align}
\end{lemma}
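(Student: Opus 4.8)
The plan is to mirror the computation in the proof of Lemma~\ref{lem:res_and_G}, but retaining the full/reduced mismatch rather than watching it cancel. First I would use the block structure $\whG_\bF = [\whM,\whN]$ to expand the left-hand side of \eqref{eq:MhatGGhat} as
\[
\whG_\bF(-\lambda_i)\begin{bmatrix}\bff_i \\ \bb_i\end{bmatrix}
= \whM(-\lambda_i)\,\bff_i + \whN(-\lambda_i)\,\bb_i,
\]
and then factor out $\whM(-\lambda_i)$ using the reduced coprime factorization $\whG = \whM^{-1}\whN$ from \eqref{eq:Ghatcoprime}, which gives $\whM^{-1}\whN = \whG$ and hence
\[
\whG_\bF(-\lambda_i)\begin{bmatrix}\bff_i \\ \bb_i\end{bmatrix}
= \whM(-\lambda_i)\bigl(\bff_i + \whG(-\lambda_i)\,\bb_i\bigr).
\]

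The single remaining ingredient is the first identity of Lemma~\ref{lem:res_and_G}, namely $\bG(-\lambda_i)\bb_i = -\bff_i$, equivalently $\bff_i = -\bG(-\lambda_i)\bb_i$. Substituting this into the parenthesized term replaces $\bff_i$ and collapses the expression to
\[
\whM(-\lambda_i)\bigl(\whG(-\lambda_i) - \bG(-\lambda_i)\bigr)\bb_i
= -\whM(-\lambda_i)\bigl(\bG(-\lambda_i) - \whG(-\lambda_i)\bigr)\bb_i,
\]
which is exactly \eqref{eq:MhatGGhat}. The contrast with Lemma~\ref{lem:res_and_G} is instructive: there the \emph{full-order} factorization produced $\bff_i + \bG(-\lambda_i)\bb_i = 0$, whereas here the \emph{reduced-order} factorization produces $\bff_i + \whG(-\lambda_i)\bb_i$, and the full-order identity converts this into the genuine transfer-function mismatch $\bG - \whG$. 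This is precisely the mechanism that lets us subsequently rewrite the $\htwogap$ formula in terms of $\bG$ and $\whG$ alone.

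There is no real analytic obstacle here; the content is a one-line algebraic identity once the two coprime factorizations are in place. The only point requiring care is that every quantity be evaluated at an admissible point: the hypothesis $\sigma(\bA)\cap\sigma(-\bA_\bF)=\emptyset$ guarantees $-\lambda_i\notin\sigma(\bA)$, so $\bG(-\lambda_i)$ is well defined and Lemma~\ref{lem:res_and_G} may be invoked, while $\whM(-\lambda_i)$ and $\whN(-\lambda_i)$ are finite because the reduced closed-loop matrix $\whA_\bF$ is asymptotically stable. I would therefore record these admissibility remarks explicitly before carrying out the two displayed manipulations.
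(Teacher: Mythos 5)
Your proposal is correct and follows essentially the same route as the paper's own proof: expand $\whG_\bF(-\lambda_i)[\bff_i^T,\bb_i^T]^T$ blockwise, factor out $\whM(-\lambda_i)$ via the reduced coprime factorization, and substitute $\bff_i=-\bG(-\lambda_i)\bb_i$ from Lemma~\ref{lem:res_and_G}. The added remarks on admissibility of the evaluation points are a harmless (and reasonable) supplement, not a different argument.
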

\begin{proof}
Using $\whG(s) = [\whM(s)]^{-1}\whN(s)$, we evaluate 
\begin{align*}
 \whG_\bF(-\lambda_i) \begin{bmatrix} \bff_i 
\\ \bb_i \end{bmatrix}  &= \whM(-\lambda_i) \bff_i + \whN(-\lambda_i) 
\bb_i   \\
&= \whM(-\lambda_i)( \bff_i + [\whM(-\lambda_i)]^{-1}\whN(-\lambda_i) \bb_i )  
\\
&= \whM(-\lambda_i)(-\bG(-\lambda_i)\bb_i+ \whG(-\lambda_i) \bb_i),
\end{align*}
where in the last step we used the first assertion in 
\eqref{eq:Gandf}.
\end{proof}

As a direct consequence of Proposition \ref{prop:cph2_res}, and Lemmas \ref{lem:res_and_G} and  \ref{lem:G_vs_GP2}, we obtain an alternative representation of the $\mathcal{H}_2$-gap error, one of our main results.

\begin{theorem} \label{thm:main_result}
Let $\bG$ and $\whG$ be stabilizable and detectable with  coprime factorizations in \eqref{eq:Gcoprime} and \eqref{eq:Ghatcoprime}. Further, let $\bG_\bF$ and $\whG_\bF$ have the pole-residue representations in \eqref{eqn:GfGpr} and \eqref{eqn:GfGfhatpr}. Then,
%  \begin{align}
%    \|\bG-\whG \|_{\mathcal{H}_2\text{-gap}}^2&= \sum_{i=1}^n \bc _i^T 
%\whM(-\lambda_i) \left( \bG(-\lambda_i) - \whG(-\lambda_i) \right)\bb_i \\
%%%
%&\qquad + \sum_{j=1}^r \whc_j^T \bM(-\widehat{\lambda}_j) \left( \whG(- 
%\widehat{\lambda}_j) - \bG (-\widehat{\lambda}_j) \right) \whb_j.
%  \end{align}
    \begin{align} \label{eq:h2gapGGhat}
    \begin{array}{c}
   {\displaystyle 
    \|\bG-\whG \|_{\mathcal{H}_2\text{-gap}}^2= \sum_{i=1}^n \bc _i^T 
\whM(-\lambda_i) \left( \bG(-\lambda_i) - \whG(-\lambda_i) \right)\bb_i } \\
{\displaystyle 
\qquad \hspace{1.1in}+ \sum_{j=1}^r \whc_j^T \bM(-\widehat{\lambda}_j) \left( \whG(- 
\widehat{\lambda}_j) - \bG (-\widehat{\lambda}_j) \right) \whb_j.}
\end{array}
  \end{align}
\end{theorem}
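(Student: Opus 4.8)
The plan is to start from the pole-residue expansion of the squared gap error given in Proposition \ref{prop:cph2_res}, equation \eqref{eq:H2gappr}, and to simplify its two sums independently by invoking Lemmas \ref{lem:res_and_G} and \ref{lem:G_vs_GP2}. Since \eqref{eq:h2gapGGhat} differs from \eqref{eq:H2gappr} only in how the closed-loop residue contributions are expressed, the whole argument reduces to rewriting each summand by means of the two lemmas and then carefully tracking signs; no new estimate is needed.

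First I would treat the sum over the full-order poles $\lambda_i$. The second identity in Lemma \ref{lem:res_and_G}, namely $\bG_\bF(-\lambda_i)[\bff_i^T,\bb_i^T]^T = 0$, shows that the $\bG_\bF$ contribution to each summand vanishes, so that $\bc_i^T(\bG_\bF(-\lambda_i)-\whG_\bF(-\lambda_i))[\bff_i^T,\bb_i^T]^T = -\bc_i^T\whG_\bF(-\lambda_i)[\bff_i^T,\bb_i^T]^T$. Next, Lemma \ref{lem:G_vs_GP2} supplies the identity $\whG_\bF(-\lambda_i)[\bff_i^T,\bb_i^T]^T = -\whM(-\lambda_i)(\bG(-\lambda_i)-\whG(-\lambda_i))\bb_i$. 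Substituting this and cancelling the two minus signs produces precisely the first sum in \eqref{eq:h2gapGGhat}.

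For the sum over the reduced-order poles $\widehat{\lambda}_j$, I would exploit the structural symmetry of the problem: this sum is obtained from the first by interchanging the roles of $\bG$ and $\whG$, and correspondingly of $\bG_\bF \leftrightarrow \whG_\bF$ and $\bM \leftrightarrow \whM$. Applying Lemma \ref{lem:res_and_G} to the \emph{reduced} system yields the annihilation identity $\whG_\bF(-\widehat{\lambda}_j)[\whf_j^T,\whb_j^T]^T = 0$, while the reduced-versus-full analogue of Lemma \ref{lem:G_vs_GP2} yields $\bG_\bF(-\widehat{\lambda}_j)[\whf_j^T,\whb_j^T]^T = -\bM(-\widehat{\lambda}_j)(\whG(-\widehat{\lambda}_j)-\bG(-\widehat{\lambda}_j))\whb_j$. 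The same substitution and sign-cancellation then produce the second sum in \eqref{eq:h2gapGGhat}, and the identity is complete.

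The only point requiring care, and the closest thing to an obstacle, is justifying the symmetric reuse of the two lemmas on the reduced system. Both lemmas were stated and proved for the full-order realization under the spectral-separation hypothesis $\sigma(\bA)\cap\sigma(-\bA_\bF)=\emptyset$, which guarantees that the relevant resolvents exist and that the Sylvester identity $\bP\bw_i=(-\lambda_i\bI-\bA)^{-1}\bB\bB^T\bw_i$ is meaningful. To apply them with $\bG$ and $\whG$ interchanged, one needs the analogous condition $\sigma(\whA)\cap\sigma(-\whA_\bF)=\emptyset$, together with the cross-conditions $-\lambda_i\notin\sigma(\whA)$ and $-\widehat{\lambda}_j\notin\sigma(\bA)$ so that every evaluation $\whG(-\lambda_i)$ and $\bG(-\widehat{\lambda}_j)$ in \eqref{eq:h2gapGGhat} is finite. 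These are mild genericity assumptions implicit in the pole-residue setting; once they hold, the proofs of Lemmas \ref{lem:res_and_G} and \ref{lem:G_vs_GP2} transfer verbatim, and the theorem follows by direct substitution into \eqref{eq:H2gappr}.
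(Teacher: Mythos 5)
Your proposal is correct and follows essentially the same route as the paper: starting from \eqref{eq:H2gappr}, using the second assertion of Lemma \ref{lem:res_and_G} to annihilate the $\bG_\bF$ (resp.\ $\whG_\bF$) contribution in each sum, then substituting the identity of Lemma \ref{lem:G_vs_GP2} (and its role-swapped analogue) to obtain \eqref{eq:h2gapGGhat}. Your explicit remark about the spectral-separation hypotheses needed to reuse the lemmas on the reduced system is a point the paper leaves implicit ("follows similarly by interchanging the roles"), but it does not change the argument.
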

\begin{proof}
Consider the first sum in the $\htwogap$ formula in \eqref{eq:H2gappr}, i.e., $\sum_{i=1}^n \bc_i^T(\bG_\bF(-\lambda_i) -\whG_\bF(-\lambda_i) ) \begin{bmatrix} \bff_i \\ \bb_i \end{bmatrix}$. First using the second assertion of  Lemma \ref{lem:res_and_G} and then using Lemma \ref{lem:G_vs_GP2} yield
{\small
$$
\sum_{i=1}^n 
\bc_i^T(\bG_\bF(-\lambda_i) 
-\whG_\bF(-\lambda_i) ) \begin{bmatrix} \bff_i \\ \bb_i \end{bmatrix}
 = - \sum_{i=1}^n 
\bc_i^T \whG_\bF(-\lambda_i) \begin{bmatrix} \bff_i \\ \bb_i \end{bmatrix}=  \sum_{i=1}^n \bc _i^T 
\whM(-\lambda_i) \left( \bG(-\lambda_i) - \whG(-\lambda_i) \right)\bb_i, 
$$}
which is the first sum, indexed by $i$, in \eqref{eq:h2gapGGhat}. The second part of   \eqref{eq:h2gapGGhat} follows similarly by interchanging the roles of 
$\bG_\bF$ and $\whG_\bF$ in Lemmas 
\ref{lem:res_and_G} and \ref{lem:G_vs_GP2}.
\end{proof}
Theorem \ref{thm:main_result} achieves what we were set to accomplish; representing the $\htwogap$ in terms of the model to be reduced, $\bG$, and 
the reduced model itself, $\whG$. Now, we can reduce $\bG$, e.g., via interpolatory projection-based methods, to construct the reduced model $\whG$ that tangentially interpolates $\bG$ and then we eliminate the selected terms from the error formula in \eqref{eq:h2gapGGhat}. We will make this interpolation aspect more concrete next. 

The following result is a direct consequence of Lemmas \ref{lem:res_and_G} and \ref{lem:G_vs_GP2}.

\begin{corollary} \label{cor:Ghat_to_GFhat}
Assume the same set-up in Lemmas  \ref{lem:res_and_G} and \ref{lem:G_vs_GP2}.
\begin{align}
\mbox{If}~~\whG(-\lambda_i)\bb_i=\bG(-\lambda_i)\bb_i,~~\mbox{then} 
~~~\whG_\bF(-\lambda_i) \begin{bmatrix} \bff_i \\ \bb_i 
\end{bmatrix}=\bG_\bF(-\lambda_i) 
\begin{bmatrix} \bff_i \\ \bb_i \end{bmatrix} = 0~~\mbox{for}~~
i=1,\dots,n.
\end{align} 
Moreover,
\begin{align}
\mbox{If}~~\whG(-\widehat{\lambda}_j)\whb_j=\bG(-\widehat{\lambda}_j)\whb_j,~~\mbox{then} ~~~\whG_\bF(-\widehat{\lambda}_j) \begin{bmatrix} \whf_j \\ \whb_j 
\end{bmatrix}=\bG_\bF(-\widehat{\lambda}_j) 
\begin{bmatrix} \whf_j \\ \whb_j \end{bmatrix}=0,~~\mbox{for}~~
j=1,\dots,r.
\end{align} 
%
%  If $ \whG(-\lambda_i)\bb_i=\bG(-\lambda_i)\bb_i $ then 
%$\whG_\bF(-\lambda_i) \begin{bmatrix} \bff_i \\ \bb_i 
%\end{bmatrix}=\bG_\bF(-\lambda_i) 
%\begin{bmatrix} \bff_i \\ \bb_i \end{bmatrix}.$ 
\end{corollary}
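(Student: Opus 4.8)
The plan is to read both implications straight off the two preceding lemmas, recognizing the second as the mirror image of the first under the exchange of the full and reduced systems. No computation beyond Lemmas~\ref{lem:res_and_G} and~\ref{lem:G_vs_GP2} is needed; the work is entirely in assembling the right identities.

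For the first implication, the equality $\bG_\bF(-\lambda_i)\begin{bmatrix}\bff_i\\\bb_i\end{bmatrix}=0$ is exactly the second assertion of Lemma~\ref{lem:res_and_G} and holds unconditionally, so it remains only to establish the companion identity for $\whG_\bF$. Here I would invoke Lemma~\ref{lem:G_vs_GP2}, which gives $\whG_\bF(-\lambda_i)\begin{bmatrix}\bff_i\\\bb_i\end{bmatrix}=-\whM(-\lambda_i)\left(\bG(-\lambda_i)-\whG(-\lambda_i)\right)\bb_i$. Under the interpolation hypothesis $\whG(-\lambda_i)\bb_i=\bG(-\lambda_i)\bb_i$ the residual $\left(\bG(-\lambda_i)-\whG(-\lambda_i)\right)\bb_i$ vanishes, so the right-hand side is zero. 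Both quantities therefore equal the zero vector, yielding the stated chain of equalities.

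For the second implication, I would repeat the argument with the roles of $\bG_\bF$ and $\whG_\bF$ interchanged, precisely as in the closing sentence of the proof of Theorem~\ref{thm:main_result}. Applying Lemma~\ref{lem:res_and_G} to the reduced system gives $\whG_\bF(-\widehat{\lambda}_j)\begin{bmatrix}\whf_j\\\whb_j\end{bmatrix}=0$ unconditionally, while the role-swapped form of Lemma~\ref{lem:G_vs_GP2} yields $\bG_\bF(-\widehat{\lambda}_j)\begin{bmatrix}\whf_j\\\whb_j\end{bmatrix}=-\bM(-\widehat{\lambda}_j)\left(\whG(-\widehat{\lambda}_j)-\bG(-\widehat{\lambda}_j)\right)\whb_j$, which vanishes under the second hypothesis $\whG(-\widehat{\lambda}_j)\whb_j=\bG(-\widehat{\lambda}_j)\whb_j$.

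The argument is essentially bookkeeping, so the only delicate point is the role-swap underlying the second implication: Lemmas~\ref{lem:res_and_G} and~\ref{lem:G_vs_GP2} are stated for the full system, and their symmetric versions presuppose that the reduced data satisfy the analogous spectral-disjointness condition $\sigma(\whA)\cap\sigma(-\whA_{\whF})=\emptyset$. Since $\whA_{\whF}$ is asymptotically stable, $\sigma(-\whA_{\whF})$ lies in the open right half-plane, and $\whG$ may itself be unstable, so this is not automatic from $\sigma(\whA)\cap\imath\IR=\emptyset$ alone. I would therefore confirm that it is part of "the same set-up" invoked in the hypotheses; once that is in hand, the mirror-image lemmas are genuinely applicable and the two implications follow with no further work.
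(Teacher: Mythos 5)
Your proof is correct and is precisely the argument the paper intends: the corollary is stated there as a direct consequence of Lemmas~\ref{lem:res_and_G} and~\ref{lem:G_vs_GP2} with no further computation, obtained exactly as you describe by combining the unconditional vanishing from Lemma~\ref{lem:res_and_G} with the interpolation hypothesis fed into Lemma~\ref{lem:G_vs_GP2}, and then swapping the roles of the full and reduced systems for the second implication. Your observation that the role-swapped lemmas require the reduced-system spectral-disjointness condition $\sigma(\whA)\cap\sigma(-\whA_{\whF})=\emptyset$ is a well-taken point of care; it is indeed what the phrase ``the same set-up'' is meant to encompass.
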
 
%
%\begin{corollary}
% If $\whG(-\widehat{\lambda}_j)\whb_j=\bG(-\widehat{\lambda}_j)\whb_j $ 
%then 
%$\whG_\bF(-\widehat{\lambda}_j) \begin{bmatrix} \whf_j \\ \whb_j 
%\end{bmatrix}=\bG_\bF(-\widehat{\lambda}_j) 
%\begin{bmatrix} \whf_j \\ \whb_j \end{bmatrix}.$ 
%\end{corollary}
Corollary \ref{cor:Ghat_to_GFhat} first reveals that we can enforce the closed-loop systems $\bG_\bF$ and $\whG_\bF$ to tangentially interpolate each other by forcing interpolation of $\bG$ and $\whG$. The resulting interpolation is occurring at specially adapted points, namely at the mirror images of the full- or reduced-order closed-loop poles. Moreover, the interpolated value is zero.  It is worth mentioning that reduced-order closed-loop poles have also been studied in the context of rational Krylov subspace methods for solving the algebraic Riccati equation in \cite{LinS15}. In that work, the authors showed that the rational Krylov subspace method coincides with a subspace iteration if the shifts are chosen as the mirrored reduced-order closed-loop poles.

Corollary \ref{cor:Ghat_to_GFhat} also reveals  that we can enforce interpolation of the closed-loop model $\bG_\bF$ \emph{without ever constructing 
$\bG_\bF$}, i.e., without needing to solve the (large-scale) Riccati equation 
\eqref{eq:Riccati_o} to compute $\bP$. This will have substantial   numerical advantages in the large-scale settings, because unlike most methods used for model reduction using the gap measure, one does not need to solve for the Gramian $\bP$.  Next, we will discuss the numerical framework to enforce these desired interpolation conditions.

\subsection{Model reduction with respect to the $\htwogap$ measure} 

First, we review briefly the projection-based tangential interpolation framework. For details, we refer the reader to, e.g., \cite{GalVVD05,AntBG10,BeaG17,AntBG20}. 
Let $\bG(s) = \bC(s\bI-\bA)^{-1}\bB$ denote the transfer function of the full-model with $m$-inputs and $p$-outputs. Suppose the left-interpolation points $\{\mu_1,\mu_2,\ldots,\mu_r\}\in \IC$ are chosen together with
non-trivial left-directions $\{\Ldir_1,\Ldir_2,\ldots,\Ldir_r \} \in \IC^{p}$. Also suppose 
the right-interpolation points $\{\sigma_1,\sigma_2,\ldots,\sigma_r\}\in \IC$ are chosen together with
non-trivial right-directions $\{\Rdir_1,\Rdir_2,\ldots,\Rdir_r \}\in \IC^{m}$. Construct the model reduction bases $\bV_r\in \IC^{n\times r}$ and $\bW_r \in \IC^{n\times r}$:
\begin{align*}
\bV_r&=\left[(\sigma_1\bI-\bA)^{-1}\bB\Rdir_1, (\sigma_2\bI-\bA)^{-1}\bB\Rdir_2, \dots,(\sigma_r\bI-\bA)^{-1}\bB \Rdir_r\right]~~\mbox{and}\\
\bW_r&=\left[(\mu_1\bI-\bA^T)^{-1}\bC^T \Ldir_1,
(\mu_2\bI-\bA^T)^{-1}\bC^T \Ldir_2, 
\dots,(\mu_r\bI-\bA^T)^{-1}\bC^T \Ldir_r\right].
\end{align*}
Assume, without loss of generality that a basis transformation is performed and $\bW_r^T\bV_r= \bI_r$. Construct the reduced model $\whG(s) = \whC(s\bI-\whA)^{-1}\whB$ via Petrov-Galerkin projection, i.e.,
$$
\whA = \bW_r^T \bA\bV_r,~~\whB = \bW_r^T\bB,~~\mbox{and}~~\whC=\bC \bV_r.
$$
Then, the reduced model $\whG$ tangentially interpolates $\bG$ in the sense that
$$
\bG(\sigma_j) \Rdir_j = \whG(\sigma_j) \Rdir_j,~~
\Ldir_j^T\bG(\mu_j) =  \Ldir_j^T\whG(\mu_j),
~~ \Ldir_j^T\bG(\sigma_j) \Rdir_j = \Ldir_j^T\whG(\sigma_j) \Rdir_j,
$$
for $j=1,2,\ldots,r$. Moreover, if $\sigma_k = \mu_k$,  then one additionally satisfies a tangential Hermite interpolation, namely $ \Ldir_k^T\bG'(\sigma_k) \Rdir_k = \Ldir_k^T\whG'(\sigma_k) \Rdir_k$
where `` $'$ " denotes the derivate with respect to $s$. Therefore, if the interpolation points and directions are specified, then constructing a reduced interpolatory transfer function can be easily constructed as described, with the main cost of solving the shifted linear systems in computing $\bV$ and $\bW$. Then,
the natural question to ask is how to choose the interpolation points and directions to minimize an error measure. This question has been answered using the regular $\htwo$ error measure. Let $\displaystyle\whG(s)= \sum_{j=1}^r \frac{\Ldir_j\Rdir_j^T}{s + \sigma_j}$ be the pole-residue decomposition. If $\whG(s)$ is the $\htwo$-optimal approximation to $\bG(s)$ in the $\htwo$  norm, then, 
$\bG(\sigma_j) \Rdir_j = \whG(\sigma_j) \Rdir_j$, 
$\Ldir_j^T\bG(\sigma_j) =  \Ldir_j^T\whG(\sigma_j)$, and $\Ldir_j^T\bG'(\sigma_j) \Rdir_j = \Ldir_j^T\whG'(\sigma_j) \Rdir_j$, for $j=1,2,\ldots,r$. Therefore, tangential Hermite interpolation is a necessary condition for $\htwo$ optimality. Note that optimal interpolation points are $\{\sigma_j\}$, the mirror images of the poles of the reduced model $\whG(s)$,  and the optimal tangential directions are based on the residues of $\whG(s)$; neither known a priori. The Iterative Rational Krylov Algorithm (IRKA) of \cite{GugAB08} and its variants \cite{Bunetal10,BeaG12,beattie2007kbm,xu2010optimal} resolve this issue by iteratively correcting the interpolation points and directions until the desired optimality conditions are met. For details, we refer the reader to \cite{GugAB08,AntBG10,BeaG17,AntBG20} and the references therein. 

The situation is similar in the $\htwogap$ problem we consider here. First, leave the question of optimality aside and focus on reasonable/well-informed interpolation points and directions selection. Recall  the $\htwogap$ error formula in \eqref{eq:h2gapGGhat}. We can eliminate the $i$th term in the first sum by choosing $\sigma_i = -\lambda_i$ as an interpolation point and $\Rdir_i = \bb_i$ as an interpolation direction. Clearly, the first sum has $n$ components and one can only eliminate $r$ conditions from there using $r$ interpolation points. One can choose the poles $\lambda_i$ with \emph{dominant} residue terms $\bc_i\bb_i^T$, for example. However, this requires that we compute the full-order closed-loop poles $\lambda_i$ by solving for the Gramian $\bP$. Also, as discussed above, the regular $\htwo$ minimization via interpolation reveals that the optimal interpolation points are determined by the reduced-order poles, not the full-order ones.

To eliminate the $j$th term from the second sum in \eqref{eq:h2gapGGhat}, we  can enforce $ \whG(-\widehat{\lambda}_j) \whb_j =  \bG (-\widehat{\lambda}_j)  \whb_j$. This puts us in the framework of the regular $\htwo$ problem. The interpolation points $\sigma_j = -  \widehat{\lambda}_j$ and the tangental directions $\Rdir_j = \whb_j$, for $j=1,2,\ldots,r$, depend on the reduced-model $\whG$ (or more precisely $\whG_\bF$) we want to compute; thus the interpolation data is not known a priori. Thus, as in IRKA, this requires an iterative algorithm that adaptively corrects the interpolation data. The major advantage compared to eliminating terms from the first sum in \eqref{eq:h2gapGGhat} is that this adaptive correction process does not require computing full-order closed-loop poles, i.e., computing $\bP$. Yet, as Corollary \ref{cor:Ghat_to_GFhat} illustrates, we are still able to interpolate the full-order closed-loop model.

Algorithm \ref{alg:gap_irka} gives a sketch of the proposed numerical scheme. 
Starting with an initial selection of interpolation data, the algorithm computes an interpolatory reduced model $\whG$ (Lines 2--4) and then computes the pole-residue representation of the reduced-order closed-loop model $\whG$ (Lines 5--6). Note that these computations are trivial since it is performed at the reduced-order dimension. Line 7 updates the interpolation data so that upon convergence of Algorithm \ref{alg:gap_irka}, we have $\sigma_j = -  \widehat{\lambda}_j$ and $\Rdir_j = \whb_j$, for $j=1,2,\ldots,r$, as we wanted to accomplish.
Upon convergence of Algorithm \ref{alg:gap_irka}, we enforce $ \whG(-\widehat{\lambda}_j) \whb_j =  \bG (-\widehat{\lambda}_j)  \whb_j$ and  the second sum in \eqref{eq:h2gapGGhat} is completely eliminated; thus leading to the eventual $\mathcal{H}_2\text{-gap}$ error 
$
    \begin{array}{c}
   {\displaystyle 
    \|\bG-\whG \|_{\mathcal{H}_2\text{-gap}}^2= \sum_{i=1}^n \bc _i^T 
\whM(-\lambda_i) \left( \bG(-\lambda_i) - \whG(-\lambda_i) \right)\bb_i }. \end{array}
  $

\begin{algorithm}[ht]
  \caption{gap-IRKA} 
  \label{alg:gap_irka}
  \begin{algorithmic}[1]
    \REQUIRE $\{\sigma_1,\dots,\sigma_r \},$ $\{\Rdir_1,\dots,\Rdir_r\}$ and 
$\{\Ldir_1,\dots,\Ldir_r\}.$    
    \ENSURE $\whA$, $\whB$, $\whC$   \\
    \WHILE{relative change in $\{\sigma_j\}> \mathrm{tol}$} \vspace{1ex}
    \STATE {Compute $\bV_r$ and $\bW_r$ from
    \begin{align*}       
\bV_r&=[(\sigma_1\bI-\bA)^{-1}\bB\Rdir_1,\dots,(\sigma_r\bI-\bA)^{-1}\bB \Rdir_r],\\
\bW_r&=[(\sigma_1\bI-\bA^T)^{-1}\bC^T 
\Ldir_1,\dots,(\sigma_r\bI-\bA^T)^{-1}\bC^T \Ldir_r]. 
    \end{align*}
    \vspace*{-3ex}
    }
   \STATE Perform basis change $\bW_r \leftarrow \bW_r (\bV_r^T\bW_r)^{-1}$   so that
   $\bW_r^T\bV_r = \bI_r$.\vspace{1ex}
    \STATE {Update ROM: $\whA = \bW_r^T \bA 
\bV_r,\whB = \bW_r^T\bB,\whC=\bC \bV_r.$ }\vspace{1ex}
    \STATE {Solve $\whA\whP + \whP \whA^T -\whP \whC^T\whC \whP  
+ \whB \whB^T =0. $}\vspace{1ex}
    \STATE {Compute $\displaystyle\whG_\bF(s)=[\bI,0]+\sum_{j=1}^r \frac{\whc_j 
[\whf_j^T,\whb_j^T]}{s-\widehat{\lambda}_j}. $}\vspace{1ex}
    \STATE {$\sigma_j \leftarrow -\lambda_j$, 
    $\Rdir_j \leftarrow \bb_j$, and $\Ldir_j \leftarrow \bc_j$ for $j=1,2,\ldots,r$. 
    } 
    \ENDWHILE\\    
  \end{algorithmic} 
\end{algorithm}

\subsection{Algorithm \ref{alg:gap_irka} and  $\htwogap$ optimality.}  So far we have motivated Algorithm \ref{alg:gap_irka} as a way to eliminate the contribution from the second-sum in the error expression \eqref{eq:h2gapGGhat}. However, the  reduced model  $\whG$ from Algorithm \ref{alg:gap_irka} achieves more. First, recall that $ \|\bG-\whG \|_{\mathcal{H}_2\text{-gap}}= \| 
\bG_\bF-\whG_\bF\|_{\mathcal{H}_2}$. Therefore, if we interpret the problem as the $\htwo$ optimal model reduction of the closed-loop model $\bG_\bF$, the $\htwo$ optimal reduced closed-loop model
(or equivalently $\htwogap$ optimal closed-loop model)
$\displaystyle\whG_\bF(s)=[\bI,0]+\sum_{j=1}^r \frac{\whc_j 
[\whf_j^T,\whb_j^T]}{s-\widehat{\lambda}_j}$ satisfies 
\begin{align} \nonumber
\left(\bG_\bF(-\widehat{\lambda}_j) - \whG_\bF(-\widehat{\lambda}_j)\right)  \begin{bmatrix} \whf_j\\ \whb_j
\end{bmatrix} = \mathbf{0},
~~\whc_j^T\left(\bG_\bF(-\widehat{\lambda}_j) - \whG_\bF(-\widehat{\lambda}_j) \right) = 0,\\ ~~\mbox{and}~~\whc_j^T\left(\bG_\bF'(-\widehat{\lambda}_j) - \whG_\bF'(-\widehat{\lambda}_j)\right) \begin{bmatrix} \whf_j\\ \whb_j 
\end{bmatrix} = 0,~~\mbox{for}~~j=1,2,\ldots,r. \label{eq:h2optcond}
\end{align}
Therefore, upon convergence, Algorithm \ref{alg:gap_irka} enforces the first set of necessary conditions in \eqref{eq:h2optcond}, namely the right-tangential interpolation conditions. 

One can also think about the necessary and sufficient conditions for the restricted setting. Assume that the reduced closed-loop poles $\{\widehat{\lambda}_j\}$ and the reduced closed-loop left residue-directions
$\{ \whc_j\}$ are fixed. Thus, the only variables in $\whG_\bF$ are the right  residue-directions $\{[\whf_j^T~\whb_j^T]\}$. As \cite{BeaG12} showed,  for fixed reduced poles and left residue-directions, $\whG_\bF$ minimizes  $ \| 
\bG_\bF-\whG_\bF\|_{\mathcal{H}_2} = \|\bG-\whG \|_{\mathcal{H}_2\text{-gap}}$ \emph{if and only if} 
$ \left(\bG_\bF(-\widehat{\lambda}_j) - \whG_\bF(-\widehat{\lambda}_j)\right)  \begin{bmatrix} \whf_j\\ \whb_j
\end{bmatrix} = \mathbf{0}$; thus the right tangential interpolation at the mirror images of the reduced poles become necessary and sufficient conditions for optimality. Therefore,  for the converged reduced closed-loop poles and left residue-directions, Algorithm  \ref{alg:gap_irka} gives the global minimizer.
We end this discussion with a warning. The optimality conditions that we argue Algorithm \ref{alg:gap_irka}
satisfies view $\whG_\bF$ as the variable to minimize the error. Corollary \ref{cor:Ghat_to_GFhat} reveals that we can enforce these optimality conditions via choosing $\whG$ appropriately. However, the 
$\htwogap$ optimality conditions with respect to $\whG$  will be different than those in \eqref{eq:h2optcond}. Those conditions, together with an algorithm to satisfy them, will be the topic of future work.

\section{Numerical examples}
 
 In this section, we present two numerical examples resulting from spatial semidiscretizations of partial differential equations. We compare Algorithm \ref{alg:gap_irka} with the method of LQG balanced truncation as well as the standard version of IRKA. Note however that both examples in fact result in unstable dynamical systems such that the application of IRKA needs further explanation. IRKA is  a method  for optimal $\mathcal{H}_2$ model reduction  of asymptotically stable dynamical systems. 
 However from a computational perspective its implementation  does not prevent one from using it on reducing unstable systems with no poles on the imaginary axis. This has been studied extensively in \cite{sinani2016iterative} showing that IRKA applied to unstable systems with a modest number of unstable poles produces accurate approximations. We have chosen this formulation of IRKA as opposed to the modified version in \cite{MagBG10} for unstable systems since the latter requires a full stable-unstable decomposition of the full model.
 
\subsection{An unstable convection diffusion equation.} \label{cdexample}

The first example is a (scalable) finite-difference discretization of the following controlled convection diffusion equation on the unit square
\begin{align*} 
v_t &= \Delta v-20\cdot \mathrm{sin}(x) v_x  + 50 \cdot v +  \chi_{\omega}\cdot u(t) && \text{in } (0,1)^2 \times (0,T), \\
v(x,0,t)&=v(0,y,t)=v(x,1,t)=v(1,y,t)=0 &&  \text{in } (0,1) \times (0,T), \\
v(x,y,0)&=v_0(x,y)=0 && \text{in } (0,1)^2.
%x(\xi,0)&=\frac{1}{\sqrt{ 0.05 \pi}} \mathrm{exp}\left(-\frac{(\xi-0.25)^2}{0.05}\right) && \text{in } (0,1), 
\end{align*}
Here, by $\chi$ we denote the characteristic function on the control domain $\omega=[0.2,0.3]\times [0.2,0.3]$.  We augment the system by an output variable $y(t)$ corresponding to the mean value in an observable domain 
\begin{align*}
 y(t) = \int_{0.5}^{0.7} \int_{0.7}^{0.9} v(x,y,t) \, \mathrm{d}x \, \mathrm{d}y .
\end{align*}
We present the results for a system of dimension $n=400$ corresponding to a uniform $20\times 20$ grid. The discretized system matrix $\bA$ has 12 eigenvalues in the right half plane but the pairs $(\bA,\bB)$ and $(\bA,\bC)$ satisfy the required stabilizability assumptions.

\begin{center}
%\begin{tabular}{|c||c|c|c|c|c|c|}
\begin{tabular}{|c||c|c|c|}
\multicolumn{4}{c} {$\| \bG_\bF-\whG_\bF \|_{\mathcal{H}_2}$}
\\[1ex]
\hline
$r$ &   IRKA & LQG-BT & gap-IRKA \\ \hline
$1$ & - & $ 5.34 \cdot 10^{-1}  $ & $ 5.34 \cdot 10^{-1} $  \\
$2$ & $ 3.09 \cdot 10^{-1} $ & $ 1.03 \cdot 10^{-1} $ & $ \bf{1.03 \cdot 10^{-1}} $  \\
$3$ & $ 1.02 \cdot 10^{-1} $ & $ 1.51 \cdot 10^{-2} $ & $ \bf{1.47 \cdot 10^{-2}} $  \\
$4$ & $ 1.09 \cdot 10^{-2} $ & $ 7.00 \cdot 10^{-3} $ & $ \bf{5.89 \cdot 10^{-3}} $  \\
$5$ & $ 1.38 \cdot 10^{-3} $ & $ 9.16 \cdot 10^{-4} $ & $ \bf{9.04 \cdot 10^{-4}} $  \\
$6$ & $ 2.23 \cdot 10^{-4} $ & $ 6.91 \cdot 10^{-5} $ & $ \bf{6.83 \cdot 10^{-5}} $  \\
$7$ & $ 6.01 \cdot 10^{-5} $ & $ 5.88 \cdot 10^{-5} $ & $ \bf{5.78 \cdot 10^{-5}} $  \\
$8$ & $ 6.24 \cdot 10^{-6} $ & $ 5.23 \cdot 10^{-6} $ & $ \bf{5.18 \cdot 10^{-6}} $  \\
$9$ & $ 7.78 \cdot 10^{-7} $ & $ 4.08 \cdot 10^{-7} $ & $ \bf{3.98 \cdot 10^{-7}} $  \\
$10$ & $ 3.11 \cdot 10^{-7} $ & $ 3.06 \cdot 10^{-7} $ & $ \bf{3.02 \cdot 10^{-7}} $  \\
$11$ & $ 2.11 \cdot 10^{-8} $ & $ 1.86 \cdot 10^{-8} $ & $ \bf{1.85 \cdot 10^{-8}} $ \\
$12$ & $ 1.45 \cdot 10^{-8}\ $ & $ 2.77 \cdot 10^{-9} $ & $ \bf{2.69 \cdot 10^{-9}} $  \\ \hline
\end{tabular} \ 
%\captionof{table}{$\| \bG_\bF-\whG_\bF \|_{\mathcal{H}_2}$} 
%\begin{tabular}{|c||c|c|c|c|c|c|}
\begin{tabular}{|c||c|c|}
\multicolumn{3}{c} {$\| \bG_\bF-\whG_\bF \|_{\mathcal{H}_\infty}$}
\\[1ex]
\hline
   IRKA & LQG-BT & gap-IRKA \\ \hline
 - & $ 2.29 \cdot 10^{-1}  $ & $ \bf{2.28 \cdot 10^{-1}} $  \\
 $ 5.66 \cdot 10^{-2} $ & $ 1.87 \cdot 10^{-2} $ & $ \bf{1.86 \cdot 10^{-2}} $  \\
 $ 4.80 \cdot 10^{-2} $ & $ 4.75 \cdot 10^{-3} $ & $ \bf{4.28 \cdot 10^{-3}} $  \\
 $ 1.12 \cdot 10^{-3} $ & $ \bf{7.87 \cdot 10^{-4}} $ & $ 2.38 \cdot 10^{-3} $  \\
 $ 1.57 \cdot 10^{-4} $ & $ 9.21 \cdot 10^{-5} $ & $ \bf{7.54 \cdot 10^{-5}} $  \\
 $ 3.70 \cdot 10^{-5} $ & $ \bf{7.59 \cdot 10^{-6}} $ & $ 8.35 \cdot 10^{-6} $  \\
 $ 3.56 \cdot 10^{-6} $ & $ 3.58 \cdot 10^{-6} $ & $ \bf{3.45 \cdot 10^{-6}} $  \\
 $ 4.31 \cdot 10^{-7} $ & $ 3.48 \cdot 10^{-7} $ & $ \bf{3.34 \cdot 10^{-7}} $  \\
 $ 1.01 \cdot 10^{-7} $ & $ \bf{3.99 \cdot 10^{-8}} $ & $\ 4.49 \cdot 10^{-8} $  \\
 $ 1.23 \cdot 10^{-8} $ & $ 1.34 \cdot 10^{-8} $ & $ \bf{1.21 \cdot 10^{-8}} $  \\
 $ 2.39 \cdot 10^{-9} $ & $ 7.19 \cdot 10^{-10} $ & $ \bf{6.41 \cdot 10^{-10}} $ \\
 $ 2.79 \cdot 10^{-9} $ & $ 5.37 \cdot 10^{-10} $ & $ \bf{5.07 \cdot 10^{-10}} $  \\ \hline
\end{tabular}
\captionof{table}{Approximation error $\| \bG_\bF-\whG_\bF \|_{\mathcal{H}_2}$ (left) and $\| \bG_\bF-\whG_\bF \|_{\mathcal{H}_\infty}$ (right).} 
\label{table:conv_diff}
\end{center}

Table \ref{table:conv_diff} shows the error of different reduced-order systems with respect to the gap topology as well as the newly introduced $\mathcal{H}_2$-gap. Note that  in all cases, the reduced-order systems computed via the proposed method gap-IRKA yield smaller $\mathcal{H}_2$-gap errors than LQG balanced truncation as well as IRKA. The results are missing  for $r=1$ for IRKA since it did not converge. Even for the $\mathcal{H}_\infty$-gap, in all but three cases,  gap-IRKA outperforms the other methods. Outperforming LQB balanced truncation with respect to the $\mathcal{H}_\infty$-gap  without ever computing large-scale Riccati solutions is a remarkable demonstration of the effectiveness of the proposed interpolatory framework in reducing unstable systems.

%\end{center}
 
%\caption{Cost of the controls $u_p$.}

%\end{figure}

 \subsection{Linearized Navier-Stokes equations}

In this example, we consider a linearization of the incompressible Navier-Stokes equations around an unsteady flow profile. In particular, we consider the Stokes-Oseen system 
\begin{equation}\label{eq:flow_prob}
  \begin{aligned}
   v_t &= \nu \Delta v  - (v \cdot \nabla)z - (z \cdot \nabla)v- \nabla p + B u &&  \text{in } \Omega \times (0,T), \\
   \mathrm{div}\, v &= 0 &&  \text{in } \Omega \times (0,T), \\
    v&= 0 &&  \text{on } \Gamma \times (0,T), \\
   v(0) &= 0, && \text{in } \Omega ,&&
  \end{aligned}
\end{equation}
where $\nu = \frac{1}{\mathrm{Re}}=\frac{1}{90}$ and the geometry $\Omega=(0,2.2)\times (0.41)$ describes the flow around a cylindric obstacle. The precise setup together with a description of the control and observation operators $B and $, respectively, is given in \cite{BehBH17} which we also refer the reader to for more details. 
We used the semi-discretized model from \cite{BehBH17} corresponding to a Taylor-Hood finite element discretization of the Navier-Stokes equations with $n=n_v+n_p=9356+1289$ degrees of freedom. Since the original systems results in a differential algebraic system, here we explicitly eliminated the pressure term by means of the algebraic approach described in \cite{HeiSS08}. Note that the system matrices of the transformed ODE are dense and an explicit computation generally should be avoided. In our case, the dimension of the ODE is $n=n_v-n_p =8067$ and can still be handled by direct solvers in MATLAB; we refrain from a more sophisticated approach here. 

We repeat the similar experiments as in Example \ref{cdexample} and compare the proposed method to IRKA and 
LQG balanced truncation. The results are depicted in Table \ref{table:flow_problem} where the missing data  for $r=2$ for IRKA is due to non-convergence. As Table \ref{table:flow_problem}  illustrates, the reduced systems generated by Algorithm \ref{alg:gap_irka}   in all cases, except  for one, yield the smallest $\mathcal{H}_2$-gap error. Moreover, eight out of twenty cases tested, the proposed method,  without computing a large-scale Riccati-based Gramians, once again outperforms the LQG balanced truncation in terms of the $\mathcal{H}_\infty$-gap as well despite not being developed for this measure. 

\begin{center}
\begin{tabular}{|c||c|c|c|}
\multicolumn{4}{c} {$\| \bG_\bF-\whG_\bF \|_{\mathcal{H}_2}$}
\\[1ex]
\hline
$r$ &   IRKA & LQG-BT & gap-IRKA \\ \hline
$2$ & - & $ 5.69 \cdot 10^{-1}  $ & $ \bf{5.67\cdot 10^{-1}} $  \\
$4$ & $ 2.42 \cdot 10^{-1} $ & $ 1.99 \cdot 10^{-1} $ & $ \bf{1.93 \cdot 10^{-1}} $  \\
$6$ & $ 1.08 \cdot 10^{-1} $ & $ 1.84 \cdot 10^{-1} $ & $ \bf{1.01 \cdot 10^{-1}} $  \\
$8$ & $ 8.56 \cdot 10^{-2} $ & $ 9.33 \cdot 10^{-2} $ & $ \bf{8.28 \cdot 10^{-2}} $  \\
$10$ & $ 5.78 \cdot 10^{-2} $ & $ 7.02 \cdot 10^{-2} $ & $ \bf{5.63 \cdot 10^{-2}} $  \\
$12$ & $ 3.25 \cdot 10^{-2} $ & $ 3.38 \cdot 10^{-2} $ & $ \bf{2.98 \cdot 10^{-2}} $  \\
$14$ & $ 1.79 \cdot 10^{-2} $ & $ 1.96 \cdot 10^{-2} $ & $ \bf{1.71 \cdot 10^{-2}} $  \\
$16$ & $ \bf{1.12 \cdot 10^{-2}} $ & $ 1.34 \cdot 10^{-2} $ & $ 1.13 \cdot 10^{-2} $  \\
$18$ & $ 7.84 \cdot 10^{-3} $ & $ 9.62 \cdot 10^{-3} $ &  $ \bf{7.19\cdot 10^{-2}} $   \\
$20$ & $ 4.79 \cdot 10^{-3} $ & $ 5.25 \cdot 10^{-3} $ & $ \bf{4.66 \cdot 10^{-3}} $ \\
$22$ & $ 3.52 \cdot 10^{-3} $ & $ 4.29 \cdot 10^{-3} $ & $ \bf{3.44 \cdot 10^{-3}} $ \\
$24$ & $ 2.49 \cdot 10^{-3} $ & $ 2.69 \cdot 10^{-3} $ & $ \bf{2.38 \cdot 10^{-3}} $ \\
$26$ & $ 1.89 \cdot 10^{-3} $ & $ 2.51 \cdot 10^{-3} $ & $ \bf{1.87 \cdot 10^{-3}} $ \\
$28$ & $ 1.39 \cdot 10^{-3} $ & $ 1.89 \cdot 10^{-3} $ & $ \bf{1.38 \cdot 10^{-3}} $ \\
$30$ & $ 1.22 \cdot 10^{-3} $ & $ 1.41 \cdot 10^{-3} $ & $ \bf{1.21 \cdot 10^{-3}} $ \\
$32$ & $ 8.94 \cdot 10^{-4} $ & $ 1.01 \cdot 10^{-3} $ & $ \bf{8.78 \cdot 10^{-4}} $ \\
$34$ & $ 7.01 \cdot 10^{-4} $ & $ 8.52 \cdot 10^{-4} $ & $ \bf{6.89 \cdot 10^{-4}} $ \\
$36$ & $ 5.31 \cdot 10^{-4} $ & $ 6.08 \cdot 10^{-4} $ & $ \bf{5.17 \cdot 10^{-4}} $ \\
$38$ & $ 3.60 \cdot 10^{-4} $ & $ 4.02 \cdot 10^{-4} $ & $ \bf{3.53 \cdot 10^{-4}} $ \\
$40$ & $ 2.51 \cdot 10^{-4} $ & $ 2.85 \cdot 10^{-4} $ & $ \bf{2.45 \cdot 10^{-4}} $ \\   \hline
\end{tabular} \ 
\begin{tabular}{|c||c|c|c|c|c|c|}
\multicolumn{3}{c} {$\| \bG_\bF-\whG_\bF \|_{\mathcal{H}_\infty}$}
\\[1ex]
\hline
   IRKA & LQG-BT & gap-IRKA \\ \hline
 - & $ 3.76 \cdot 10^{-1}  $ & $ \bf{3.70\cdot 10^{-1}} $  \\
  $ 9.84 \cdot 10^{-2} $ & $ 8.26 \cdot 10^{-2} $ & $ \bf{7.52 \cdot 10^{-2}} $  \\
  $ 5.11 \cdot 10^{-2} $ & $ 9.02 \cdot 10^{-2} $ & $ \bf{5.11 \cdot 10^{-2}} $  \\
 $ 5.10 \cdot 10^{-2} $ & $ \bf{4.01 \cdot 10^{-2}} $ & $ 4.86 \cdot 10^{-2} $  \\
 $ 3.01 \cdot 10^{-2} $ & $ \bf{2.47 \cdot 10^{-2}} $ & $ 3.01 \cdot 10^{-2} $  \\
 $ 9.68 \cdot 10^{-3} $ & $ 1.15 \cdot 10^{-2} $ & $ \bf{9.63 \cdot 10^{-3}} $  \\
$7.16 \cdot 10^{-3} $ & $ 7.45 \cdot 10^{-3} $ & $ \bf{6.62 \cdot 10^{-3}} $  \\
 $ 5.15 \cdot 10^{-3} $ & $ \bf{3.73 \cdot 10^{-3}} $ & $ 4.70 \cdot 10^{-3} $  \\
 $ 4.78 \cdot 10^{-3} $ & $ \bf{3.14 \cdot 10^{-3}} $ & $ 4.44 \cdot 10^{-3} $   \\
 $ 2.44 \cdot 10^{-3} $ & $ \bf{1.28 \cdot 10^{-3}} $ & $ 2.38 \cdot 10^{-3} $ \\
 $ 1.12 \cdot 10^{-3} $ & $ \bf{9.55 \cdot 10^{-4}} $ & $ 1.06 \cdot 10^{-3} $ \\
 $ 5.81 \cdot 10^{-4} $ & $ 7.38 \cdot 10^{-4} $ &$ \bf{5.67 \cdot 10^{-4}} $ \\
 $ \bf{5.52 \cdot 10^{-4}} $ & $ 8.33 \cdot 10^{-4} $ & $ 5.58 \cdot 10^{-4} $ \\
 $ 4.41 \cdot 10^{-4} $ & $ \bf{4.39 \cdot 10^{-4}} $ & $  4.44 \cdot 10^{-4} $ \\
 $ 4.39 \cdot 10^{-4} $ & $ \bf{2.91 \cdot 10^{-4}} $ & $ 4.32 \cdot 10^{-4} $ \\
 $ 2.63 \cdot 10^{-4} $ & $ \bf{2.02 \cdot 10^{-4}} $ & $\ 2.60 \cdot 10^{-4} $ \\
 $ 2.47 \cdot 10^{-4} $ & $ \bf{1.62 \cdot 10^{-4}} $ & $ 2.47 \cdot 10^{-4} $ \\
 $ 1.22 \cdot 10^{-4} $ & $ 1.35 \cdot 10^{-4} $ & $ \bf{1.21 \cdot 10^{-4}} $ \\
 $ 7.54 \cdot 10^{-5} $ & $ \bf{6.71 \cdot 10^{-5}} $ & $ 7.53 \cdot 10^{-5} $ \\
 $ 5.30 \cdot 10^{-5} $ & $ 6.20 \cdot 10^{-5} $ & $ \bf{5.12 \cdot 10^{-5}} $ \\   \hline
\end{tabular}
\captionof{table}{Approximation error $\| \bG_\bF-\whG_\bF \|_{\mathcal{H}_2}$ (left) and $\| \bG_\bF-\whG_\bF \|_{\mathcal{H}_\infty}$ (right).} 
\label{table:flow_problem}
\end{center}
\section{Conclusion}

We have presented a new approach for model reduction of linear stabilizable and detectable control systems. Based on the theory of left-coprime factorizations and a newly introduced $\mathcal{H}_2$-gap, we have derived pole-residue formulae that suggest tangentially interpolating the original transfer function at the mirrored closed-loop reduced system poles. Since these are not known a priori, we modified the iterative rational Krylov algorithm accordingly. Two numerical examples associated with (unstable) partial differential equations illustrate the applicability and good performance of the new approach.
% Future work will focus on deriving first-order necessary optimality conditions as well as suitable numerical methods.

 %%%%%%%%%%%%%%%%%%%%%%%%%%%%%
\section*{Acknowledgment}
%The work of Breiten was supported in parts by \textcolor{red}{add grant info}.
Parts of this work were completed while the first author visited Virginia Tech; the kind hospitality was greatly appreciated.
The work of Beattie was supported in parts by NSF through Grant DMS-1819110.
The work of Gugercin was supported in parts by NSF through Grants DMS-1720257 and DMS-1819110. 
 \bibliographystyle{spmpsci}
\bibliography{references}

\begin{thebibliography}{10}
\providecommand{\url}[1]{{#1}}
\providecommand{\urlprefix}{URL }
\expandafter\ifx\csname urlstyle\endcsname\relax
  \providecommand{\doi}[1]{DOI~\discretionary{}{}{}#1}\else
  \providecommand{\doi}{DOI~\discretionary{}{}{}\begingroup
  \urlstyle{rm}\Url}\fi

\bibitem{AniBGA13}
Ani{\'c}, B., Beattie, C., Gugercin, S., Antoulas, A.: Interpolatory
  weighted-$\mathcal{H}_2$ model reduction.
\newblock Automatica \textbf{49}(5), 1275--1280 (2013)

\bibitem{AntBG20}
Antoulas, A., , Beattie, C., Gugercin, S.: Interpolatory Methods for Model
  Reduction.
\newblock Society for Industrial and Applied Mathematics, to appear (2020)

\bibitem{Ant05}
Antoulas, A.: Approximation of Large-Scale Dynamical Systems.
\newblock Society for Industrial and Applied Mathematics (2005).
\newblock \doi{10.1137/1.9780898718713}

\bibitem{AntBG10}
Antoulas, A.C., Beattie, C.A., Gugercin, S.: Interpolatory model reduction of
  large-scale dynamical systems.
\newblock In: Efficient Modeling and Control of Large-Scale Systems, pp. 3--58.
  Springer (2010)

\bibitem{barrachina2005parallel}
Barrachina, S., Benner, P., Quintana-Ort{\'\i}, E.S., Quintana-Ort{\'\i}, G.:
  Parallel algorithms for balanced truncation of large-scale unstable systems.
\newblock In: Proceedings of the 44th IEEE Conference on Decision and Control,
  pp. 2248--2253. IEEE (2005)

\bibitem{beattie2007kbm}
Beattie, C., Gugercin, S.: {Krylov}-based minimization for optimal
  {$\mathcal{H}_2$} model reduction.
\newblock In: Proceedings of 46th IEEE Conference on Decision and Control, pp.
  4385--4390 (2007)

\bibitem{BeaG12}
Beattie, C., Gugercin, S.: Realization-independent
  $\mathcal{H}_2$-approximation.
\newblock In: Proceedings of 51st IEEE Conference on Decision and Control, pp.
  4953 -- 4958 (2012)

\bibitem{BehBH17}
Behr, M., Benner, P., Heiland, J.: Example setups of {N}avier-{S}tokes
  equations with control and observation: Spatial discretization and
  representation via linear-quadratic matrix coefficients.
\newblock Tech. rep., Max Planck Institute for Complex Dynamical Systems
  (2017).
\newblock Available from \url{https://arxiv.org/abs/1707.08711}

\bibitem{benner2016}
Benner, P., Saak, J., Uddin, M.M.: Balancing based model reduction for
  structured index-2 unstable descriptor systems with application to flow
  control.
\newblock Numerical Algebra, Control \& Optimization \textbf{6}, 1--20 (2016).
\newblock \doi{10.3934/naco.2016.6.1}.
\newblock
  \urlprefix\url{http://aimsciences.org//article/id/01f6d0e6-4c13-44fd-9471-8e3bf8978554}

\bibitem{BreBG15}
Breiten, T., Beattie, C., Gugercin, S.: Near-optimal frequency-weighted
  interpolatory model reduction.
\newblock Systems \& Control Letters \textbf{78}, 8--18 (2015)

\bibitem{Bunetal10}
Bunse-Gerstner, A., Kubalinska, D., Vossen, G., Wilczek, D.: {h2}-norm optimal
  model reduction for large scale discrete dynamical {MIMO} systems.
\newblock Journal of Computational and Applied Mathematics \textbf{233}(5),
  1202--1216 (2010).
\newblock \doi{10.1016/j.cam.2008.12.029}

\bibitem{Cur03}
Curtain, R.: Model reduction for control design for distributed parameter
  systems.
\newblock In: R.~Smith, M.~Demetriou (eds.) Research Directions in Distributed
  Parameter Systems, pp. 95--121. SIAM (2003)

\bibitem{GalVVD05}
Gallivan, K., Vandendorpe, A., Van~Dooren, P.: Model reduction of {MIMO}
  systems via tangential interpolation.
\newblock SIAM Journal on Matrix Analysis and Applications \textbf{26}(2),
  328--349 (2005)

\bibitem{GawJ90}
Gawronski, W., Juan, J.N.: Model reduction in limited time and frequency
  intervals.
\newblock International Journal of Systems Science \textbf{21}(2), 349--376
  (1990).
\newblock \doi{10.1080/00207729008910366}

\bibitem{Gug03}
Gugercin, S.: Projection methods for model reduction of large-scale dynamical
  systems.
\newblock Ph.D. thesis, Rice University (2003)

\bibitem{GugAB08}
Gugercin, S., Antoulas, A., Beattie, C.: $\mathcal{H}_2$ model reduction for
  large-scale linear dynamical systems.
\newblock SIAM Journal on Matrix Analysis and Applications \textbf{30}(2),
  609--638 (2008)

\bibitem{GugA03}
{Gugercin}, S., {Antoulas}, A.C.: A time-limited balanced reduction method.
\newblock In: 42nd IEEE International Conference on Decision and Control,
  vol.~5, pp. 5250--5253 (2003)

\bibitem{BeaG17}
Gugercin, S., Beattie, C.: Model reduction by rational interpolation.
\newblock In: Model Reduction and Approximation, pp. 297--334. SIAM,
  Philadelphia (2017)

\bibitem{HeiSS08}
Heinkenschloss, M., Sorensen, D., Sun, K.: Balanced truncation model reduction
  for a class of descriptor systems with application to the {O}seen equations.
\newblock SIAM Journal on Scientific Computing \textbf{30}(2), 1038--1063
  (2008)

\bibitem{JonS83}
{Jonckheere}, E., {Silverman}, L.: A new set of invariants for linear systems
  -- application to reduced order compensator design.
\newblock IEEE Transactions on Automatic Control \textbf{28}(10), 953--964
  (1983)

\bibitem{Kue18}
K{\"u}rschner, P.: Balanced truncation model order reduction in limited time
  intervals for large systems.
\newblock Advances in Computational Mathematics \textbf{44}(6), 1821--1844
  (2018).
\newblock \doi{10.1007/s10444-018-9608-6}

\bibitem{LinS15}
Lin, Y., Simoncini, V.: A new subspace iteration method for the algebraic
  {R}iccati equation.
\newblock Numerical Linear Algebra with Applications \textbf{22}(1), 26--47
  (2015).
\newblock \doi{10.1002/nla.1936}

\bibitem{MagBG10}
{Magruder}, C., {Beattie}, C., {Gugercin}, S.: Rational {K}rylov methods for
  optimal $\mathcal{L}_2$ model reduction.
\newblock In: 49th IEEE Conference on Decision and Control (CDC), pp.
  6797--6802 (2010)

\bibitem{Mey90}
Meyer, D.: Fractional balanced reduction: model reduction via fractional
  representation.
\newblock IEEE Transactions on Automatic Control \textbf{35}(12), 1341--1345
  (1990)

\bibitem{NetJB84}
{Nett}, C., {Jacobson}, C., {Balas}, M.: A connection between state-space and
  doubly coprime fractional representations.
\newblock IEEE Transactions on Automatic Control \textbf{29}(9), 831--832
  (1984)

\bibitem{Par91}
Partington, J.: Approximation of unstable infinite-dimensional systems using
  coprime factors.
\newblock Systems \& Control Letters \textbf{16}, 89--96 (1991)

\bibitem{sinani2016iterative}
Sinani, K.: Iterative rational krylov algorithm for unstable dynamical systems
  and genaralized coprime factorizations.
\newblock Master's thesis, Virginia Tech (2016)

\bibitem{SinG18}
Sinani, K., Gugercin, S.: $\mathcal{H}_2(t_f)$ optimality conditions for a
  finite-time horizon.
\newblock Automatica, accepted to appear  (2019).
\newblock Available from \url{https://arxiv.org/abs/1806.10797}

\bibitem{Vid84}
Vidyasagar, M.: The graph metric for unstable plants and robustness estimates
  for feedback stability.
\newblock IEEE Transactions on Automatic Control \textbf{29}(5), 403--418
  (1984)

\bibitem{xu2010optimal}
Xu, Y., Zeng, T.: Optimal $\htwo$ model reduction for large scale {MIMO}
  systems via tangential interpolation.
\newblock International Journal of Numerical Analysis and Modeling
  \textbf{8}(1), 174--188 (2011)

\bibitem{zhou1999balanced}
Zhou, K., Salomon, G., Wu, E.: Balanced realization and model reduction for
  unstable systems.
\newblock International Journal of Robust and Nonlinear Control \textbf{9}(3),
  183--198 (1999)

\end{thebibliography}

%\newpage
 %\section*{\textcolor{blue}{The main messages}}
%\begin{enumerate}
%\item Introduce the $H_2$-gap norm
%\item Proposition \ref{prop:l2_bound}: $H_2$-gap implies bound for $\| G - \hat{G}\|_{L_2}$
%\item  Proposition \ref{prop:cph2_res}: Extends pole-residue $H_2$ formula to $H_2$-gap
%\item All these expressions use closed-loop quantities $G_F$ and $\hat{G}_F$ 
%\item Lemmas \ref{lem:res_and_G} -- \ref{lem:G_vs_GP2} established important quantities to set the stage for Theorem 
%\ref{thm:main_result}
%\item  Theorem  \ref{thm:main_result} eliminates $G_F$ and $\hat{G}_F$  from the error formula
%\item This allows, via Corollary \ref{cor:Ghat_to_GFhat}, interpolating the closed-loop system without computing $P$
%\item Then it motivates the algorithm
%\end{enumerate}

\end{document}